\theoremstyle{plain}
\newtheorem{thm}{Theorem}[section]
\newtheorem{lem}[thm]{Lemma}
\newtheorem{prop}[thm]{Proposition}
\newtheorem{ques}[thm]{Question}
\theoremstyle{definition}
\newtheorem{rem}[thm]{Remark}
\numberwithin{equation}{section}
\newcommand{\mb}{\mathbb}
\newcommand{\mc}{\mathcal}
\def \a{\alpha}   \def \d{\delta}
   \def \e{\epsilon} \def \ve{\varepsilon}
 \def \l{\lambda}
\renewcommand*{\backref}[1]{}
\renewcommand*{\backrefalt}[4]{\quad \tiny
  \ifcase #1 (\textbf{NOT CITED.})%
  \or    (Cited on page~#2.)%
  \else   (Cited on pages~#2.)%
  \fi}
\def\MRbibitem{\@ifnextchar[\my@lbibitem\my@bibitem}
\def\mybiblabel#1#2{\@biblabel{{\hyperref{http://www.ams.org/mathscinet-getitem?mr=#1}{}{}{#2}}}}
\def\myhyperanchor#1{\Hy@raisedlink{\hyper@anchorstart{cite.#1}\hyper@anchorend}}
\def\my@lbibitem[#1]#2#3#4\par{%
  \item[\mybiblabel{#2}{#1}\myhyperanchor{#3}\hfill]#4%
  \@ifundefined{ifbackrefparscan}{}{\BR@backref{#3}}%
  \if@filesw{\let\protect\noexpand\immediate
    \write\@auxout{\string\bibcite{#3}{#1}}}\fi\ignorespaces%
}
\def\my@bibitem#1#2#3\par{%
  \refstepcounter\@listctr
  \item[\mybiblabel{#1}{\the\value\@listctr}\myhyperanchor{#2}\hfill]#3%
  \@ifundefined{ifbackrefparscan}{}{\BR@backref{#2}}%
  \if@filesw\immediate\write\@auxout
    {\string\bibcite{#2}{\the\value\@listctr}}\fi\ignorespaces%
}
\begin{document}

\author{Wen Huang} \address[Wen Huang] {Wu Wen-Tsun Key Laboratory of Mathematics, USTC, Chinese Academy of Sciences and Department of Mathematics, \\
 University of Science and Technology of China,\\
  Hefei, Anhui, China}
\email[W. Huang]{wenh@mail.ustc.edu.cn}

 \author{Zeng Lian} \address[Zeng Lian] {College of Mathematical Sciences\\ Sichuan University\\
    Chengdu, Sichuan, 610016, China} \email[Z. Lian]{lianzeng@scu.edu.cn, zenglian@gmail.com}

\author{Xiao Ma} \address[Xiao Ma] {  Wu Wen-Tsun Key Laboratory of Mathematics, USTC, Chinese Academy of Sciences and Department of Mathematics, \\
 University of Science and Technology of China,\\
  Hefei, Anhui, China}
\email[X. Ma]{xiaoma@ustc.edu.cn}

\author{Leiye Xu} \address[Leiye Xu] { Wu Wen-Tsun Key Laboratory of Mathematics, USTC, Chinese Academy of Sciences and Department of Mathematics, \\
 University of Science and Technology of China,\\
  Hefei, Anhui, China}
\email[L. Xu]{leoasa@mail.ustc.edu.cn}

\author{Yiwei Zhang} \address[Yiwei Zhang] { School of Mathematics and Statistics, Center for Mathematical Sciences, Hubei Key Laboratory of
Engineering Modeling and Scientific Computing,\\ Huazhong University of Sciences and Technology,\\
Wuhan 430074, China}
\email[Y. Zhang]{yiweizhang831129@gmail.com}

\title[]{Ergodic optimization theory for a class of typical maps}

\thanks{Huang is partially supported by NSF of China (11431012,11731003). Lian is partially supported by NSF of China (11725105,11671279). Xu is partially supported by NSF of China (11801538, 11871188). Zhang is partially supported by NSF of China (117010200,11871262).}

\begin{abstract}
In this article, we consider the weighted ergodic optimization problem of a class of dynamical systems $T:X\to X$ where $X$ is a compact metric space and $T$ is Lipschitz continuous. We show that once  $T:X\to X$ satisfies both the {\em Anosov shadowing property }({\bf ASP}) and the {\em Ma\~n\'e-Conze-Guivarc'h-Bousch property }({\bf MCGBP}),  the minimizing measures of generic H\"older observables are unique and supported on a periodic orbit.
Moreover, if $T:X\to X$ is a subsystem of  a dynamical system $f:M\to M$ (i.e. $X\subset M$ and $f|_X=T$) where $M$ is  a compact smooth manifold, the above conclusion holds for $C^1$ observables.

Note that a broad class of classical dynamical systems satisfies both ASP and MCGBP, which includes {\em Axiom A attractors, Anosov diffeomorphisms }and {\em uniformly expanding maps}. Therefore, the  open problem proposed by Yuan and Hunt in \cite{YH} for $C^1$-observables is  solved consequentially.
\end{abstract}

\maketitle

\parskip 0.4cm
\section{Introduction}\label{S:Introduction}

\noindent {\bf Context and motivation:}
Ergodic optimization theory mainly studies the problems relating to   minimizing (or maximum) orbits, minimizing (or maximum) invariant measures and minimizing   (or maximum) ergodic averages. \\
This theory has strong connection with other fields, such as Anbry-Mather theory \cite{Contreras_Mane,Mane} in Lagrangian Mechanics; ground state theory \cite{BLL} in thermodynamics formalism and multifractal analysis; and controlling chaos \cite{OGY90,SGOY93} in control theory. 

Let $(X,T)$ be a  dynamical system and $u$ be a real-valued function on $X$. For a given orbit of $T$, say $\mc O=\{T^ix\}_{i=0,1,\cdots}$, the time average of $u$ along $\mc O$ is usually defined by $\lim_{n\to\infty}\frac1n\sum_{i=0}^{n-1}f(T^ix)$ when it converges. An orbit is called $u$-minimizing if the time average of $u$ along the orbit is less than along any other orbits. \\
In a reasonably general case, $X$ is assumed compact and $T$, $u$ are assumed continuous. Thereafter, $\mathcal{M}(X,T)$, the set of all $T$-invariant Borel probability measures on $X$, becomes a non-empty convex and compact topological space with respect to weak$^*$ topology. It is well known that, by Birkhoff ergodic theorem, for a given $\mu\in \mc M(X,T)$, the time averages of $u$ are well defined for orbits initiated on $\mu$-a.e. $x$. Moreover, when $\mu$ is ergodic, time averages of $u$ become constant ($\mu$-a.e.) and equal to the space average $\int_Xud\mu$ which is also called the ergodic average of $u$ with respect to $\mu$. Denote by  $\mathcal{M}^{e}(X,T)\subset\mathcal{M}(X,T)$  the collection of ergodic measures, which is the set of  the extremal points of $\mathcal{M}(X,T)$.
By the compactness and convexity  of $\mc M(X,T)$, the minimizing (or maximizing) ergodic average of $u$ always exists and can be achieved by some ergodic measure which is called $u$-minimizing measure. 

To understand minimizing measures is the main task of the classical ergodic optimization theory. This kind of problem involves three main factors: complexity of the systems, regularity of the observable functions, and complexity of the minimizing measures. The well known Meta-Conjecture 
states that {\em when the systems is chaotic then the minimizing measures of generic typical observables have low complexity}.  As a special case of the Meta-Conjecture, a more precise conjecture, the Typical Periodic Optimization (TPO) Conjecture,  is proposed by Yuan and Hunt (\cite{YH}, 1999) first and later in a general form, which is focusing on a special class of chaotic systems including  uniformly hyperbolic systems and uniformly expanding maps, and is conjecturing  that  for a suitably continuous real-valued observable space $V$ (e.g. Lipschitz observable space),  $V_{per}$ contains an open and dense subset, where $V_{per}$ is the subspace of $V$ such that  for each $u\in V_{per}$ the set of $u$-minimizing measures contains at least a periodic measure. TPO Conjecture is one of the fundamental questions raised in the field of ergodic optimization theory, which has attracted sustained attention and yielded considerable results for last two decades. 

The first attempt towards the TPO Conjecture is due to Contreras, Lopes and Thieullen \cite{CLT}. They considered the case where $T$ is smooth, orientation-preserving, uniformly expanding map of the circle, and $V$ is the $\a$-H\"older functions space $C^{0,\a}$, and derived a weaker version result. \\
Afterwards, there is a series of works for the case that $(X,T)$ is subshifts of finite type: for example, Bousch \cite{Bousch_Walters} proved that the TPO Conjecture holds for Walter functions; Quas and Siefken \cite{QS} asserted the validity of the conjecture for the case where $T$ is a full shift and $V$ is the space of "super continuous" functions; Bochi and Zhang (\cite{BZ}) solved the case when $T$ is a one-side shift on two symbols and $V$ is a space of functions with strong modulus of regularity; Morris \cite{Morris_entropy} proved that for a generic real-valued H\"older continuous function on a subshift of finite type, the maximizing measure must have zero entropy.  \\
 For more comprehensive survey for the classical ergodic optimization theory, we refer the readers to Jenkinson \cite{Jenkinson_survey, Jenkinson_newsurvey}, to Bochi \cite{Bochi} and to Baraviera, Leplaideur, Lopes \cite{BLL} for a historical perspective of the development in this area.\\
  In the existing literature, the best result towards the TPO Conjecture is obtained by Conteras \cite{Contreras_EO}, which built on work of \cite{Bousch_Mane, Morris_entropy, QS, YH}. The main result obtained in \cite{Contreras_EO}  states that for a uniformly expanding map the minimizing measures of (topological) generic Lipschitz observations are uniquely supported on periodic orbits.

The purpose of this paper is  to investigate validity of the TPO conjecture for a considerably broad class of  typical dynamical systems such as Anosov diffeomorphisms. Precisely,  systems  considered in this paper are assumed to be Lipschitz and satisfy the so called {\bf Anosov Shadow property} (abbr. ASP) and {\bf Ma\~n\'e-Conze-Guivarc'h-Bousch property} (abbr. MCGBP),  definitions of which are given in Section \ref{S:SetResults}. In the existing literature, {\em Axiom A attractors, Anosov diffeomorphisms} and {\em expanding maps} all satisfy both  {\bf ASP} and {\bf MCGBP}.


\noindent {\bf Summary of the main result:}
In this paper, we extend the validity of the TPO Conjecture on two scopes: systems and observables. To avoid tediousness, we summarize only part of the main result into the following theorem, the precise statement of which will be given in Section \ref{S:SetResults}.

\noindent{\bf Theorem A:} {\em For an Axiom A or uniformly expanding system and a (topologically) generic observable function  from H\"older function space, Lipschitz function space or $C^{1,0}$ function space (if well defined) the minimizing measure is unique and is supported on  a periodic orbit.}

This gives a positive answer to the conjecture proposed by Yuan and Hunt on 1999 (Conjecture 1.1 in \cite{YH}) for H\"older, Lipschitz, and $C^1$ cases.

\noindent {\bf Remarks on the techniques of the proof:}
A major ingredient of the proof in this paper is based on  a closing lemma due to Bressaud and Quas \cite{BQ}, which allow us to identify the suitable periodic orbits with "good shape". By the "good shape", roughly speaking, we mean that the periodic orbit should satisfy three properties:1) period of the orbit should not be too large; 2) points of the orbit should be distributed evenly, in another word, distance of distinguished points from the orbit should not be too small; 3) orbit should be close enough to a given invariant compact set.  Based on these periodic orbit with "good shape", one can construct a sequence of observables whose minimizing measures are unique and periodic converges to a arbitrarily given observable.  

Comparing with the proofs of  existing results such as Contreras' proof in \cite{Contreras_EO}, our proof follows a more direct way. For example, since Contreras' proof firstly went to an intermedia result of Morris  \cite{Morris_entropy} which states that the minimizing measures of generic Lipschitz observables have zero entropy, the methodology largely depends on the uniformly expanding property of the system;  In contrast, our proof is devoted to searching the target periodic orbits directly, which automatically avoids the entropy argument of the minimizing measures, thus the methodology is applicable to a broader class of systems beyond expanding maps. 


\noindent{\bf Organization of the paper:} The structure of the paper is organized as follows. In Section \ref{S:SetResults}, we give the main setting and notions, and state the main result; In Section \ref{S:ProofMainResult}, we give the proof of the main result (Theorem \ref{T:MainResult}); In Section \ref{S:CsCase}, we consider the case of observable functions with high regularity, for which some partial results and remaining questions are presented; In Appendix \ref{Sec-manelemma}, we  briefly explain why Anosov diffeomorphisms being {\bf MCGBP} for the sake of completeness.


\section{settings and results}\label{S:SetResults}
In this paper, we will study the typical optimization problem in weighted ergodic optimization theory which has strong connection with zero temperature limit. The current section is devoted to formulating the setting and stating the result. 

Let $(X,d)$ be a compact
metric space and $T : X \rightarrow X$ be a continuous map. Denote by $\mathcal{M}(X,T)$ the set of all $T$-invariant Borel probability measures on $X$, which is a non-empty convex and compact topological space with respect to weak$^*$ topology. Denote by  $\mathcal{M}^{e}(X,T)\subset\mathcal{M}(X,T)$  the ergodic measures, which is the set of  the extremal points of $\mathcal{M}(X,T)$.\\
Let $u:X\to\mathbb{R}$ and $\psi:X\to\mathbb{R}^{+}$ be continuous functions.  The quantity $\beta(u;\psi, X, T)$ defined by
\begin{equation}\label{equ_ratiomimimalerg}
\beta(u;\psi,X,T):=\min_{\nu\in\mathcal{M}(X,T)}\frac{\int u d\nu}{\int \psi d\nu},
\end{equation}
is called the \emph{ratio minimum ergodic average}, and any $\nu\in\mathcal{M}(X,T)$ satisfying $$\frac{\int u d\nu}{\int \psi d\nu}=\beta(u;\psi,X,T)$$
is called a \emph{$(u,\psi)$-minimizing measure}. Denote that $$\mathcal{M}_{\min}(u;\psi,X,T):=\left\{\nu\in\mathcal{M}(X,T):\frac{\int ud\nu}{\int \psi d\nu}=\beta(u;\psi,X,T)\right\}.$$
By compactness of $\mathcal{M}(X,T)$, and the continuity of the operator $\frac{\int ud(\cdot)}{\int \psi d(\cdot)}$, it directly follows that $\mathcal{M}_{min}(u;\psi,X,T)\neq\emptyset$, which contains at least one ergodic $(u,\psi)$-minimizing measure by ergodic decomposition.

For each real number $\eta\geq0$, we call a sequence $\{x_{i}\}_{i=0}^{n-1}\subset X$  \emph{a periodic $\eta$-pseudo-orbit} of $(X,T)$, if each $x_{i+1}$ belongs to an $\eta$-neighbourhood of $T(x_{i})$, for all $i=0,\cdots,n-1\mod n$. With this convention, we say that $(X,T)$ satisfies {\bf Anosov Shadow property} (abbr. ASP) and {\bf Ma\~n\'e-Conze-Guivarc'h-Bousch property} (abbr. MCGBP), if
\begin{enumerate}
	\item[A.]{\bf (ASP)} There are positive constants $\lambda,\delta,C,L$ such that
	\begin{enumerate}
		\item[(1).] For $n\in\mathbb{N}$ and $x,y\in X$ with $d(T^{i}x,T^{i}y)\le\delta$ for all $0\le i\le n$, one has for all $0\le k\le n$,
		$d(T^{k}x,T^{k}y)\le Ce^{-\lambda\min(k,n-k)}(d(x,y)+d(T^{n}x,T^{n}y)).$
		\item[(2).] For any $0\le\eta\le \delta$, $n\ge 1$ and a periodic $\eta$-pseudo-orbit $\{x_i\}_{i=0}^{n-1}$, there is a periodic orbit $\{T^{i}x\}_{i=0}^{m-1}$ with period $m$ such that $m|n$ and $d(x_i,T^{i}x)\le L\eta,~~\forall 0\leq i\leq n-1$.
	\end{enumerate}
	\item[B.]{\bf (MCGBP)} For any $0<\alpha\le 1$, there exists positive integer $K=K(\alpha)$ such that for all $u\in \mathcal{C}^{0,\alpha}(X)$, there is $v\in \mathcal{C}^{0,\alpha}(X)$ such that $$\bar u:=u_K-v\circ T^K+v-\beta(u;{X},T)\ge 0$$ where $u_K=\frac{1}{K}\sum_{i=0}^{K-1}u\circ T^i$ and $\beta(u;{X},T)=\min_{\nu\in\mathcal{M}(X,T)}\int u d\nu$.
\end{enumerate}
Here, $\a\in (0,1]$ and $\mathcal{C}^{0,\alpha}(X)$ is the space of $\alpha$-H\"{o}lder continuous real-valued function on $X$ endowed with the $\alpha$-H\"{o}lder norm $\|u\|_{\alpha}:=\|u\|_{0}+[u]_{\alpha}$, where $\|u\|_{0}:=\sup_{x\in X}|u(x)|$ is the super norm, and $[u]_{\alpha}:=\sup_{x\neq y}\frac{|u(x)-u(y)|}{d^{\alpha}(x,y)}$. Also note that when $\a=1$, $C^{0,1}(X)$ becomes the collection of all real valued Lipschitz continuous functions, and  $[u]_{1}$ becomes the minimum Lipschitz constant of $u$.

For the sake of completeness, we give a brief proof of Anosov diffeomorphisms satisfying {\bf MCGBP} in the Appendix Section \ref{Sec-manelemma}, while {\bf ASP} is a standard property of Anosov diffeomorphisms thus the proof of which is not repeated in this paper.

In summary, let $\mathfrak{C}$ be the set of triple $(X,T,\psi)$ satisfying the following properties:
\begin{itemize}
\item[H1)] $(X,d)$ is a compact metric space and $T:X\rightarrow X$ is Lipschitz continuous;
\item[H2)] $(X,T)$ satisfying  {\bf ASP} and {\bf MCGBP};
\item[H3)] $\psi:X\to \mb R^+$ are continuous.
\end{itemize}
The main results obtained in this paper is summarized in the following:
\begin{thm}\label{T:MainResult}
Suppose $(X,T,\psi)\in\mathfrak C$, then the following hold:
\begin{itemize}
\item[I)] For $\a\in (0,1]$, if $\psi\in \mc C^{0,\a}(X)$, then there exists an open and dense set $\mathfrak P\subset \mc C^{0,\a}(X)$ such that for any $u\in \mathfrak P$, $(u,\psi)$-minimizing measure is uniquely supported on a periodic orbit of $T$.
\item[II)] If $(X,T)$ is a sub-system of a dynamical system $(M,f)$ (i.e. $X\subset M$ and $T=f|_X$) and $\psi\in \mc C^{0,1}(M)$, where $M$ is a compact $\mc C^\infty$ Riemannian manifold , then there exists an open and dense set $\mathfrak P\subset \mc C^{1,0}(M)$ such that for any $u\in \mathfrak P$, the $(u|_X,\psi|_X)$-minimizing measure of $(X,T)$ is uniquely supported on a periodic orbit of $T$, where $\mc C^{1,0}(M)$ is the Banach space of continuous differentiable functions on $M$ endowed with the standard $C^1$-norm.
\end{itemize}
\end{thm}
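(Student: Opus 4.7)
I aim to prove Part~I by showing $\mathfrak P$ is both open and dense in $\mc C^{0,\alpha}(X)$, then deduce Part~II by transferring the density construction to $C^1$ bump functions on $M$. The density argument is the substantive part and combines three ingredients: (a)~\textbf{MCGBP}, which reduces the weighted minimization problem to analyzing the zero set of a non-negative $\alpha$-H\"older function; (b)~the \textbf{Bressaud--Quas closing lemma} together with \textbf{ASP}, producing periodic orbits with controlled period, mutual separation, and proximity to any prescribed minimizing measure; and (c)~a calibrated negative-bump perturbation that promotes the chosen periodic orbit to the unique minimizer.

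\textbf{Density.} Given $u \in \mc C^{0,\alpha}(X)$ and $\varepsilon > 0$, set $\beta := \beta(u;\psi,X,T)$; a measure $\mu$ is $(u,\psi)$-minimizing iff $\int(u-\beta\psi)\,d\mu = 0$. Note that $\beta(u-\beta\psi;X,T)=0$, so MCGBP applied to $u - \beta\psi$ produces $K = K(\alpha)$ and $v \in \mc C^{0,\alpha}(X)$ with
$$\bar U := (u-\beta\psi)_K - v\circ T^K + v \ge 0,$$
and every minimizing measure lies in $\{\bar U = 0\}$ (while $\int(u-\beta\psi)\,d\nu = \int \bar U\,d\nu$ for any $T$-invariant $\nu$). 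Fix an ergodic minimizer $\mu_0$; by Bressaud--Quas combined with ASP(2), for small $\eta$ there is a periodic orbit $\mc O = \{T^i x_\ast\}_{i=0}^{p-1}$ with minimum mutual separation $\rho$ and $\mathrm{dist}(\mc O, \mathrm{supp}(\mu_0)) \le L\eta$. Since $\bar U$ is $\alpha$-H\"older and vanishes on $\mathrm{supp}(\mu_0)$, the periodic average $\int \bar U\,d\mu_{\mc O}$ is bounded by $[\bar U]_\alpha (L\eta)^\alpha$. I construct $\phi \in \mc C^{0,\alpha}(X)$ equal to $-\sigma$ on $\bigcup_i B(T^i x_\ast, \rho/3)$, vanishing outside $\bigcup_i B(T^i x_\ast, \rho/2)$, with $[\phi]_\alpha = O(\sigma/\rho^\alpha)$. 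Choosing $\eta, \rho, \sigma$ so that $\sigma > [\bar U]_\alpha (L\eta)^\alpha$ and $\sigma/\rho^\alpha < \varepsilon$ achieves simultaneously: (i)~$\mu_{\mc O}$ strictly beats $\mu_0$ for the perturbed problem, (ii)~the perturbation has $\alpha$-H\"older norm less than $\varepsilon$. Measures $\nu$ with support disjoint from $\bigcup_i B(T^i x_\ast, \rho/2)$ satisfy $\int \phi\,d\nu = 0$ and keep their ratios; measures partially entering the bump are controlled via ASP(1), which forces them close to $\mu_{\mc O}$ in the weak$^\ast$ sense, and an additional arbitrarily small generic perturbation eliminates ties and makes $\mu_{\mc O}$ the unique $(u+\phi,\psi)$-minimizer.

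\textbf{Openness and Part~II.} If $u_0 \in \mathfrak P$ has its unique minimizer on a periodic orbit $\mc O_0$ of period $p_0$, then the associated $\bar U_0$ vanishes only on $\mc O_0$ and is bounded below by a positive constant outside any open neighborhood of $\mc O_0$. Continuity of the MCGBP coboundary $v$ in $u$ (traceable through the Ma\~n\'e-lemma construction, see Appendix \ref{Sec-manelemma}) together with the compactness-based finiteness of periodic orbits of bounded period in any fixed compact set keeps the minimizer on $\mc O_0$ under small $\mc C^{0,\alpha}$-perturbations. For Part~II, the continuous embedding $\mc C^{1,0}(M) \hookrightarrow \mc C^{0,1}(M)$ transfers openness from the Lipschitz case; for density I replace $\phi$ by a $C^1$ bump on $M$ supported in a tubular neighborhood of $\mc O \subset M$, built by mollifying a piecewise-linear template and having $C^1$-norm of the same scaling $\sigma/\rho$, so the rest of the perturbation argument carries over verbatim.

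\textbf{Main obstacle.} The quantitative matching of parameters in the density step is where the difficulty concentrates: the required $\sigma/\rho^\alpha < \varepsilon$ must coexist with $\sigma > [\bar U]_\alpha (L\eta)^\alpha$, which hinges on exploiting the precise form of Bressaud--Quas in which period $p$, separation $\rho$, and shadow closeness $\eta$ satisfy a favorable polynomial relation valid for every H\"older exponent $\alpha \in (0,1]$. A secondary technical point is tracking the $\alpha$-H\"older norm of the MCGBP coboundary $v$ in terms of $\|u\|_\alpha$, which is needed for the continuity step underlying openness and for handling the case of higher regularity observables in Part~II.
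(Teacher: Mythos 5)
Your skeleton (MCGBP reduction to a non-negative $\bar U$ vanishing on the minimizing set, a Bressaud--Quas periodic orbit, a calibrated perturbation pinned to that orbit) matches the paper's strategy, but two load-bearing steps are missing or would fail as stated. First, you assume that Bressaud--Quas delivers a periodic orbit whose minimum mutual separation $\rho$ and shadowing closeness $\eta$ satisfy a ``favorable polynomial relation'' making $[\bar U]_\alpha(L\eta)^\alpha<\varepsilon\rho^\alpha$ achievable. It does not: the closing lemma controls only the deviation $d_{\alpha,Z}(\mc O)$ and gives no lower bound whatsoever on the gap $D(\mc O)$. Producing an orbit with $D^\alpha(\mc O)/d_{\alpha,Z}(\mc O)$ arbitrarily large is the content of the paper's Proposition \ref{P:GoodPer}, proved by a nontrivial iteration: whenever the gap is too small relative to the deviation, the orbit is split at the near-collision into a periodic pseudo-orbit of at most half the period, re-shadowed via \textbf{ASP} into a genuine periodic orbit, and the growth of the deviation is tracked through at most $\log_2 n$ such halvings. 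Without this (or an equivalent) construction, your parameter matching --- which you yourself identify as the crux --- has no foundation.

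Second, the uniqueness-on-an-open-set argument is not carried out, and the quantitative design of your bump works against you. The decisive step in the paper (Lemma \ref{L:PositivityIntG}) shows $\int G\,d\mu>0$ for \emph{every} ergodic $\mu\neq\mu_{\mc O}$ by an orbit-segment decomposition: a non-generic point tracking $\mc O$ must escape to distance $\ge D(\mc O)/(2Lip_T)$, and the positive payoff collected at the escape time ($\varepsilon(D(\mc O)/2Lip_T)^\alpha$ from the global penalty $\varepsilon d^\alpha(\cdot,\mc O)$) must dominate the H\"older errors accumulated along the tracking, which by \textbf{ASP}(1) are proportional to $\frac{4C^\alpha}{1-e^{-\lambda\alpha}}$ times the $\alpha$-power of the radius of the entry region. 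This works in the paper because the entry region is forced, via the ratio condition $L_{\mc O}>\hat L$, to be much smaller than $D(\mc O)$. In your construction the bump plateau has radius $\rho/3\sim D(\mc O)/3$ and H\"older seminorm $\sigma/\rho^\alpha$, so the accumulated error from $\phi$ alone along a tracking segment is of order $\frac{4C^\alpha}{1-e^{-\lambda\alpha}}\sigma$, i.e.\ a (typically large) constant multiple of the single escape gain $\sigma$; the inequality fails. Your fallback --- ``ASP(1) forces such measures close to $\mu_{\mc O}$ weak$^\ast$, and an arbitrarily small generic perturbation eliminates ties'' --- only yields density of observables having \emph{some} periodic minimizer, not an open set on which the minimizer is \emph{unique} and periodic; tie-breaking perturbations can relocate the minimizer to a non-periodic measure, which is exactly the difficulty the theorem must overcome. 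Relatedly, your openness argument leans on continuity of the Ma\~n\'e--Conze--Guivarc'h coboundary $v$ in $u$, which is neither established in the paper nor needed there: openness comes for free because the statement is proved for the whole ball $\{u+\varepsilon d^\alpha(\cdot,\mc O)+h:\|h\|_\alpha<10\varepsilon,\ \|h\|_0<\tfrac{D^\alpha(\mc O)}{\sharp\mc O}\hat\delta\}$ at once.
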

\begin{rem}\label{R:NoDPAssump}
It is worth to point out that Theorem \ref{T:MainResult} only requires $(X,T)$ satisfy {\bf ASP} and {\bf MCGBP}, which means that, in particular, neither topological transitivity for Anosov diffeomorphisms (although it is conjectured that Anosov diffeomorphisms are always topological transitive) nor non-wandering property for Axiom A attractors are needed. { If in addition $X\supset \text{supp}(\mu)$ for all $\mu\in \mathcal{M}(M,f)$ in Theorem \ref{T:MainResult} (II), then for any $u\in \mathfrak P$, the $(u,\psi)$-minimizing measure of $(M,f)$ is also uniquely supported on a periodic orbit of $f$.}
\end{rem}


On the other hand, the reason of adding the nonconstant weight $\psi$ mainly lies in the studies on the zero temperature limit (or ground state) of the $(u,\psi)$-weighted equilibrium state for thermodynamics formalism, i.e., the measure $\mu_{u,\psi}\in\mathcal{M}(X,T)$, which satisfies
\begin{equation}\label{equ:weigthed equilibriumstate}
 \mu_{u,\psi}:=\arg\max\left\{h_{\nu}(T)+\frac{\int ud\nu}{\int\psi d\nu}:~\forall\nu\in\mathcal{M}(X,T)\right\},
\end{equation}
where $h_{\nu}$ is the Kolmogorov-Sinai entropy of $\nu$. Such weighted equilibrium state arises naturally in the studies of non-conformal multifractal analysis (e.g. high dimensional Lyapunov spectrum) for asymptotically (sub)additive potentials, see works \cite{BF,BCW,FH}. In fact, when the ground state exists, (i.e., the limit $\lim_{t\to+\infty}\mu_{tu,\psi}$ exists), then the limit formulates a special candidate of $(-u,\psi)$-minimizing measure.

\section{Proof of Theorem \ref{T:MainResult}}\label{S:ProofMainResult}
Before starting the proof, we introduce some notions at first for the sake of convenience.  For  $(X,T,\psi)\in\mc C$ and a continuous function $u:X\to\mathbb{R}$, define
\begin{equation}\label{E:Z_u,psi}
Z_{u,\psi}:=\overline{\cup_{\mu\in\mathcal{M}_{min}(u;\psi,{X},T)}supp(\mu)}.
\end{equation}
For $\a\in(0,1]$, a non-empty subset $Z$ of $X$ and a periodic orbit $\mc O$ of $(X,T)$, define \emph{the $\a$-deviation of $\mc O$ with respect to $Z$} by
$$d_{\a,Z}(\mc O)=\sum_{x\in \mc O}d^\a(x,Z),$$
where we recall that $d$ is the metric on $X$. Let $\l,\d,C,L$ be the constants as in {\bf ASP} and fix these notations. Define $D:{X}\times{X}\to[0,+\infty)$ by
$$D(x,y)=\left\{\begin{array}{ll}
\delta, & \text{ if } d(x,y)\ge \delta,\\
d(x,y), & \text{ if } d(x,y)< \delta.
\end{array}
\right.$$
By a periodic orbit $\mathcal{O}$ of $(X,T)$, the {\it gap} of $\mathcal{O}$ is defined by
\begin{equation}\label{E:DefDO}
D(\mathcal{O})=\left\{\begin{array}{ll}
\delta, & \text{ if } \sharp\mathcal{O}=1,\\
\min_{x,y\in\mathcal{O}, x\not=y}D(x,y), & \text{ if } \sharp\mathcal{O}>1.
\end{array}
\right.
\end{equation}

We will prove Part I) and  Part II) of Theorem \ref{T:MainResult} separately.

\subsection{Proof of Part I) of Theorem \ref{T:MainResult}}\label{S:Part I)}

The proof of Part I) of Theorem \ref{T:MainResult} mainly contains two steps:

\begin{itemize}
\item[Step 1.] We show how to construct periodic orbit $\mc O$ of $(X,T)$ to make the ratio $\frac{D^\a(\mc O)}{d_{\a,Z}(\mc O)}$ as large as needed. Such a periodic orbit will be a candidate to support the minimizing measures of observables nearby $u$.
\item[Step 2.] We show that for any given $u\in \mc C^{0,\a}(X)$ there exists an open set $\mc U$ of observables being arbitrarily close to $u$, whose minimizing measures are all supported on a single periodic orbit $\mc O$.
\end{itemize}
The main aim of Step 1 can be summarized into the following Proposition:
\begin{prop}\label{P:GoodPer}
Let $(X,T)$ satisfy H1) and {\bf ASP}, $u : X\rightarrow \mathbb{R}$ and $\psi: X \rightarrow \mathbb{R}^+$ are continuous.
Then for any $\a\in(0,1]$,  a  given $\tilde L>0$ and $T$-forward-invariant non-empty subset $Z\subset X$ (i.e. $T(Z)\subset Z$), there exists an periodic orbit $\mc O$ of $(X,T)$ such that
\begin{align}\label{E:GapCond0}
	\frac{D^\alpha(\mathcal{O})}{d_{\alpha,Z}(\mathcal{O})}> \hat L.
\end{align}
\end{prop}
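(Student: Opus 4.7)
The plan is to realize $\mathcal{O}$ as the shadow of a long orbit segment that is entirely contained in the closure of $Z$ and that nearly closes on itself. Because $Z$ is forward-invariant, $\overline{Z}$ is compact and $T$-invariant, so $\overline{Z}$ contains a recurrent point $z$. For any $\eta>0$ we may choose $n\ge 1$ with $d(T^{n}z,z)<\eta$, so that $\{z,Tz,\ldots,T^{n-1}z\}$ is a periodic $\eta$-pseudo-orbit. Applying \textbf{ASP}(2), one obtains a periodic point $x$ of some period $m\mid n$ whose orbit $\mathcal{O}=\{T^{i}x\}_{i=0}^{m-1}$ satisfies $d(T^{i}x,T^{i}z)\le L\eta$ for $0\le i\le n-1$. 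Since $T^{i}z\in\overline Z$ gives $d(T^{i}z,Z)=0$, the shadowing bound yields
\[
	d_{\alpha,Z}(\mathcal{O})\le m(L\eta)^{\alpha}\le n(L\eta)^{\alpha},
\]
which provides the desired smallness of the denominator in (\ref{E:GapCond0}).

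The crux is to control $D(\mathcal{O})$ from below by a quantity that does \emph{not} vanish with $\eta$. The strategy is to run \textbf{ASP}(1) in reverse on the periodic orbit itself: if two distinct points $T^{i}x,T^{j}x\in\mathcal{O}$ were separated by much less than $\delta$, then the hyperbolic dichotomy in \textbf{ASP}(1), applied on an arc of length comparable to $m$, would force their entire orbit segments to remain $\delta$-close. Combined with the shadowing bound $L\eta$, this would in turn force the pseudo-orbit points $T^{i}z$ and $T^{j}z$ to be essentially as close as $T^{i}x,T^{j}x$, producing a constraint on self-approaches of the orbit segment $\{T^{i}z\}_{i=0}^{n-1}$.

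The technical heart of the argument, and the main obstacle I expect, is therefore to choose $z$ and the closing time $n$ so that the segment $\{T^{i}z\}_{i=0}^{n-1}$ carries a quantitative separation property: there is a scale $\epsilon_{0}>0$, depending on $z$ and $n$ but \emph{not} on $\eta$, below which the segment has no self-returns except the designed one at time $n$. This is precisely the information supplied by the Bressaud--Quas closing lemma \cite{BQ} invoked in the introduction; combining it with the shadowing bound yields $D(\mathcal{O})\ge \epsilon_{0}-2L\eta\ge\epsilon_{0}/2$ once $\eta$ is small enough. (In the degenerate case $m=1$ the gap $D(\mathcal{O})=\delta$ by convention, and no separation argument is needed.)

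With both estimates in hand, it suffices to take $\eta$ sufficiently small, depending on $\epsilon_{0}$, $n$, $L$, $\alpha$, and $\hat L$, so that
\[
	\frac{D^{\alpha}(\mathcal{O})}{d_{\alpha,Z}(\mathcal{O})}\ge \frac{(\epsilon_{0}/2)^{\alpha}}{n(L\eta)^{\alpha}}>\hat L,
\]
which is possible because the right-hand side scales like $\eta^{-\alpha}$ while $\epsilon_{0}$ is fixed. This delivers a periodic orbit satisfying (\ref{E:GapCond0}) and completes the construction.
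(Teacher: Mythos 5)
Your first half is fine and is essentially the paper's starting point: shadow a nearly closed orbit segment lying in (a coding of) $Z$ to get a periodic orbit whose $\alpha$-deviation from $Z$ is controlled by $n(L\eta)^{\alpha}$ (the paper does this more carefully, via a symbolic coding of $Z$ and Lemma \ref{lemma-1}, precisely so as to get the superpolynomial decay $d_{\alpha,Z}(\mathcal{O})\le n^{-k}$ in the period $n$; but the upper bound on the denominator is not where the difficulty lies). The genuine gap is exactly at the point you flag as the technical heart: the claim that one can choose $z$ and $n$ so that the segment $\{T^{i}z\}_{i=0}^{n-1}$ has a self-separation scale $\epsilon_{0}$ independent of $\eta$ and, implicitly, large compared with $n^{1/\alpha}L\eta$. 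First, this is circular as stated: for a recurrent point the closing time $n$ is a function of $\eta$, so ``$\epsilon_{0}$ depending on $z$ and $n$ but not on $\eta$'' carries no content. Second, the Bressaud--Quas closing lemma does not supply any such separation: it is a purely combinatorial statement (a shift of finite type with $M$ symbols and entropy $h$ contains a periodic point of period at most $1+Me^{1-h}$) that controls the \emph{length} of the periodic word, hence the size of $d_{\alpha,Z}(\mathcal{O})$, and says nothing about the minimal distance between distinct points of the resulting orbit. For a long orbit segment in a chaotic invariant set the minimum self-distance typically decays exponentially in $n$, so there is no reason it should dominate $n^{1/\alpha}\eta$, and your appeal to ASP(1) ``in reverse'' only transfers the unproved separation claim from the true orbit back to the pseudo-orbit.

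The paper closes this gap by a different mechanism: it never tries to show that the first shadowed orbit has a good gap. Instead, if $D^{\alpha}(\mathcal{O}_{0})\le\tilde{L}\,d_{\alpha,Z}(\mathcal{O}_{0})$, then the gap is so small that the orbit nearly self-intersects; cutting it at that near-intersection yields a periodic pseudo-orbit of at most half the period, which ASP(2) re-shadows into a new periodic orbit $\mathcal{O}_{1}$ with $d_{\alpha,Z}(\mathcal{O}_{1})\le\tilde{L}_{1}n^{-k}$, the loss factor $\tilde{L}_{1}$ being a constant depending only on the system and $\tilde{L}$. Since the period at least halves at each step, the descent terminates after at most $\log_{2}n$ iterations, either at an orbit with a good gap or at a fixed point, whose gap equals $\delta$ by the convention in (\ref{E:DefDO}); the accumulated loss $\tilde{L}_{1}^{\log_{2}n}$ is only polynomial in $n$ and is absorbed by choosing $k$ large in the superpolynomial bound of Lemma \ref{lemma-3} (which is exactly why that lemma is stated with an arbitrary power $n^{k}$). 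To salvage your approach you would need to incorporate this descent, or an equivalent selection of the orbit segment; the separation you need cannot be extracted from recurrence plus the closing lemma alone.
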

Similarly, the main aim of Step 2 can be summarized into the following Proposition:
\begin{prop}\label{P:OpenDencePerMin}
Let $(X,T,\psi)\in\mc C$ (satisfying H1), H2) and H3)) and $\psi, u \in \mathcal{C}^{0,\alpha}({X})$ for some $\alpha\in (0, 1]$. Then for  any $\varepsilon>0$, there exist $\hat L,\hat \d>0$ which depend on $\ve,\a, u, \psi$ and system constants only such that the following holds:  If there is a  periodic orbit  $\mathcal{O}$ of $(X,T)$ satisfying
\begin{align}\label{E:GapCond}
	\frac{D^\alpha(\mathcal{O})}{d_{\alpha,Z_{u,\psi}}(\mathcal{O})}> \hat L,
	\end{align}
 then the observable function $u_{\e,h}:=u+\varepsilon d^\alpha(\cdot,\mathcal{O}) +h$ has a unique minimizing measure $$\mu_{\mathcal{O}}:=\frac{1}{\sharp\mathcal{O}}\sum_{x\in\mathcal{O}}\delta_{x}$$
 whenever $h\in \mathcal{C}^{0,\alpha}({X})$ with $\|h\|_\alpha<10\e \text{ and }
\|h\|_0<\frac{D^\a(\mc O)}{\sharp \mc O}\cdot \hat \d.$
\end{prop}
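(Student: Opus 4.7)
\emph{Reduction and baseline inequality.} The plan is to recast the ratio minimization as a standard ergodic-optimization problem, extract a H\"older subaction via MCGBP, and then combine with ASP to localize the minimum to $\mc O$. Set $\beta^*:=\int u_{\ve,h}\,d\mu_{\mc O}\big/\int\psi\,d\mu_{\mc O}$ and $g:=u_{\ve,h}-\beta^*\psi$; since $\int g\,d\mu_{\mc O}=0$, the proposition reduces to showing $\int g\,d\nu>0$ for every $\nu\in\mc M(X,T)\setminus\{\mu_{\mc O}\}$. Noting $\int d^\alpha(\cdot,\mc O)\,d\mu_{\mc O}=0$, one has $\beta^*=\beta_{\mc O}+\int h\,d\mu_{\mc O}\big/\int\psi\,d\mu_{\mc O}$ with $\beta_{\mc O}:=\int u\,d\mu_{\mc O}\big/\int\psi\,d\mu_{\mc O}$. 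I would then apply MCGBP to the unperturbed function $u-\beta^0\psi$, where $\beta^0:=\beta(u;\psi,X,T)$, obtaining a subaction $v_0\in\mc C^{0,\alpha}(X)$ and integer $K$ with $\overline{u-\beta^0\psi}\ge 0$ vanishing on $Z_{u,\psi}$. Because $\overline{u-\beta^0\psi}$ is $\alpha$-H\"older and vanishes on $Z_{u,\psi}$, it is pointwise bounded by $C_0\,d^\alpha(\cdot,Z_{u,\psi})$; integrating against $\mu_{\mc O}$ yields $\beta_{\mc O}-\beta^0\le C_0'\,d_{\alpha,Z_{u,\psi}}(\mc O)/\sharp\mc O$.

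\emph{Main inequality.} Combining the Ma\~n\'e-type bound $\int u\,d\nu\ge\beta^0\int\psi\,d\nu$ with the baseline estimate and controlling the linear $h$-contribution by $\|h\|_0$, I expect to obtain for every $\nu\in\mc M(X,T)$
\[
\int g\,d\nu\ \ge\ \ve\int d^\alpha(\cdot,\mc O)\,d\nu\,-\,C_1\,\frac{d_{\alpha,Z_{u,\psi}}(\mc O)}{\sharp\mc O}\,-\,C_2\,\|h\|_0,
\]
with $C_1,C_2$ depending only on $\|u\|_\alpha,\|\psi\|_\alpha,\min\psi$ and system constants; the constraint $\|h\|_\alpha<10\ve$ keeps these uniform in $\mc O$ and $h$. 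The hypotheses $D^\alpha(\mc O)/d_{\alpha,Z_{u,\psi}}(\mc O)>\hat L$ and $\|h\|_0<\hat\delta\,D^\alpha(\mc O)/\sharp\mc O$ then bound the two correction terms by $(C_1/\hat L+C_2\hat\delta)\,D^\alpha(\mc O)/\sharp\mc O$. Choosing $\hat L$ large and $\hat\delta$ small in terms of $\ve$ makes this at most $\tfrac{\ve}{2}D^\alpha(\mc O)/\sharp\mc O$, so $\int g\,d\nu>0$ unless $\int d^\alpha(\cdot,\mc O)\,d\nu<c_0(\ve)\,D^\alpha(\mc O)/\sharp\mc O$ for some small explicit $c_0$.

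\emph{ASP step and the main obstacle.} The hard part is excluding ergodic $\nu\ne\mu_{\mc O}$ in the residual regime where $\nu$ puts almost all its mass very close to $\mc O$. Here the full strength of ASP enters. For a $\nu$-generic point $x$, Birkhoff's theorem forces its orbit to spend a proportion of time near $1$ inside disjoint balls of radius $\ll D(\mc O)/2$ centered at points of $\mc O$, with the gap $D(\mc O)$ making the nearest-point assignment unambiguous. Long orbit segments then become pseudo-orbits of $\mc O$; ASP(2) shadows them by true periodic orbits, and the gap forces the shadow to be $\mc O$ itself. ASP(1) provides exponential contraction between this shadow and the orbit of $x$ in the middle of shadowing windows; letting the windows grow and combining with invariance of $\mathrm{supp}(\nu)$ and Poincar\'e recurrence propagates this contraction to all times, forcing $\mathrm{supp}(\nu)\subset\mc O$ and hence $\nu=\mu_{\mc O}$. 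The principal technical subtlety is propagating these middle-window contractions globally in the (possibly non-invertible) setting while keeping all constants uniform in $\mc O$; this is where the explicit gap condition $D^\alpha(\mc O)$, rather than mere proximity to $Z_{u,\psi}$, becomes indispensable.
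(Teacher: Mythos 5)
Your first two paragraphs are sound and essentially reproduce the paper's setup: passing to $g=u_{\ve,h}-\beta^{*}\psi$ (which agrees, on integration against invariant measures, with the paper's normalized observable $G$), using {\bf MCGBP} to bound $\beta_{\mc O}-\beta^{0}$ by a multiple of $d_{\alpha,Z_{u,\psi}}(\mc O)/\sharp\mc O$, and deducing your displayed lower bound for $\int g\,d\nu$. The gap is in the third paragraph. It is not true that an ergodic $\nu\neq\mu_{\mc O}$ with $\int d^{\alpha}(\cdot,\mc O)\,d\nu<c_{0}\,D^{\alpha}(\mc O)/\sharp\mc O$ must have $\mathrm{supp}(\nu)\subset\mc O$: take $T$ the full shift on two symbols, $\mc O$ the fixed point $0^{\infty}$ (so $D(\mc O)=\delta$, $\sharp\mc O=1$, and your threshold is the fixed number $c_{0}\delta^{\alpha}$), and $\nu_{n}$ the periodic measure on the orbit of $(0^{n}1)^{\infty}$; then $\int d^{\alpha}(\cdot,\mc O)\,d\nu_{n}\to 0$ while $\nu_{n}\neq\mu_{\mc O}$. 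Such measures lie in your residual regime, your main inequality gives nothing for them (its right-hand side is negative there), and no shadowing argument can force their supports into $\mc O$: a generic orbit may track $\mc O$ over arbitrarily long windows and still escape with positive frequency, so the step ``letting the windows grow \dots propagates this contraction to all times'' fails. Since every ergodic measure supported on $\mc O$ already equals $\mu_{\mc O}$, the residual regime genuinely contains competitors that your argument never rules out.

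What is needed instead --- and what the paper does --- is an orbit-level block decomposition rather than a measure-level inequality. For a generic point $z$ of $\nu\neq\mu_{\mc O}$ one isolates the first time $m_{1}$ at which $d(T^{m_{1}}z,\mc O)\ge D(\mc O)/(2\,Lip_{T})$; the penalty term $\ve\,d^{\alpha}(T^{m_{1}}z,\mc O)\ge\ve\bigl(D(\mc O)/(2Lip_{T})\bigr)^{\alpha}$ is a definite gain proportional to $D^{\alpha}(\mc O)$, while the accumulated deviation $\sum_{n}d^{\alpha}(T^{n}z,T^{n}y_{0})$ over the entire preceding shadowing window is summed into a geometric series via {\bf ASP}(1), hence bounded by a constant times $|a_{\mc O}|\|\psi\|_{0}+\|h\|_{0}$ \emph{independently of the window length}. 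The hypotheses on $\hat L$ and $\hat\delta$ then make the gain beat the loss on every such block, so every block of the Birkhoff sum of $G$ is positive and $\int G\,d\nu\ge 0$ for all ergodic $\nu$; a final perturbation of $\ve$ upgrades this to uniqueness. It is precisely in this per-block bookkeeping, not in any support-containment statement, that the gap condition on $D(\mc O)$ is used.
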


It is clear that the collection of $u_{\e,h}$ in Proposition \ref{P:OpenDencePerMin} forms an non-empty open subset of $\mc C^{0,\a}(X)$, which is about $\varepsilon$-apart from $u$. Since $\ve$ can be taken arbitrarily small and the existence of periodic orbit satisfying (\ref{E:GapCond}) are guaranteed by Proposition \ref{P:GoodPer}, Part I) of Theorem \ref{T:MainResult} follows. Thus it remains to prove Proposition \ref{P:GoodPer} and \ref{P:OpenDencePerMin}.


\subsubsection{Proof of Proposition \ref{P:GoodPer}}\label{S:ProofPropGoodPer}

At first, we introduce a lemma giving a quantified estimate of the denseness of periodic orbits, which can be viewed as a version of Quas and Bressaud's periodic approximation lemma.

\begin{lem}\label{lemma-3}
Let $(X,T)$ be a dynamical system satisfying  H1) and {\bf ASP}, $Z$ be a nonempty $T$-forward-invariant subset of $X$. Then for all $\a\in(0,1]$ and  $k>0$, one has
$$\lim_{n\to\infty}n^k\min_{\mathcal{O}\in \mathcal{O}^n}d_{\alpha,Z}(\mathcal{O})=0,$$
where $\mathcal{O}^n$ denote the collection of all periodic orbits of $(X,T)$ with period not larger than  $n$
\end{lem}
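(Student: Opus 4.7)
The strategy is to convert close returns of a single orbit inside $Z$ into genuine periodic orbits close to $Z$ via \textbf{ASP}(2), and then calibrate the scale against the period.

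Fix any $z \in Z$; forward invariance gives $\{T^i z\}_{i \ge 0} \subset Z$. For a target scale $\eta \in (0, \delta]$, let $\mathcal{N}(\eta/2)$ denote the $\eta/2$-covering number of $X$, which is finite by compactness. Choosing $N > \mathcal{N}(\eta/2)$ and pigeonholing $\{T^i z\}_{i=0}^{N-1}$ produces indices $0 \le i_0 < i_1 \le N - 1$ with $d(T^{i_0} z, T^{i_1} z) \le \eta$. Extending the block $T^{i_0} z, T^{i_0+1} z, \ldots, T^{i_1-1} z$ periodically gives a periodic $\eta$-pseudo-orbit of length $\ell := i_1 - i_0 \le N - 1$ lying entirely in $Z$. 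Applying \textbf{ASP}(2) produces a true periodic orbit $\mathcal{O} = \{T^p x\}_{p=0}^{m-1}$ with $m \mid \ell$ and $d(T^{i_0+p} z, T^p x) \le L\eta$ for all $0 \le p \le \ell-1$, so every point of $\mathcal{O}$ lies within $L\eta$ of $Z$ and
\[
d_{\alpha, Z}(\mathcal{O}) = \sum_{p=0}^{m-1} d^\alpha(T^p x, Z) \le m (L\eta)^\alpha \le L^\alpha (N-1) \eta^\alpha.
\]

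To finish, I would tune $(\eta, N)$ as functions of $n$ so that $m \le n$ and $n^k d_{\alpha, Z}(\mathcal{O}) \to 0$ for every prescribed $k > 0$. A single direct application of pigeonhole with $N \approx \mathcal{N}(\eta/2)$ only buys polynomial decay, with exponent controlled by the upper box dimension of $X$, which is insufficient once $k$ is large. To upgrade to super-polynomial decay valid for every $k$, I would iterate the construction in the spirit of the Bressaud--Quas closing lemma \cite{BQ}: using an orbit segment of polynomial length in $n$, one extracts multiple close returns of $\{T^i z\}$ at a coarse scale, then re-pigeonholes among those returns at a much finer scale, and proceeds inductively. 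Each iteration shrinks $\eta$ by a super-polynomial factor while inflating the period only by a bounded multiplicative constant, so after finitely many iterations chosen in terms of $k$, \textbf{ASP}(2) promotes the resulting nested pseudo-orbit to a periodic orbit of period at most $n$ whose $d_{\alpha,Z}$ is smaller than $n^{-k}$.

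The main obstacle is implementing this iteration cleanly: one must splice the shorter pseudo-orbits at the successive scales together while invoking the divisibility clause $m \mid n$ of \textbf{ASP}(2) to keep the resulting period bounded by $n$, and one must verify that the shadowing errors produced at each scale do not accumulate when they are combined. Once the iteration yields the super-polynomial relationship between period and distance to $Z$, the limit $\lim_{n\to\infty} n^k \min_{\mathcal{O} \in \mathcal{O}^n} d_{\alpha, Z}(\mathcal{O}) = 0$ for every $k > 0$ follows immediately by letting $\eta = \eta(n) \to 0$ at the prescribed super-polynomial rate.
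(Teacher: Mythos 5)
Your opening pigeonhole step is fine as far as it goes, but the proposed upgrade to super-polynomial decay is where the argument breaks down, and the paper's proof relies on two ingredients that your sketch never brings into play. First, re-pigeonholing among the close returns at a finer scale $\eta_2$ does not inflate the period by ``only a bounded multiplicative constant'': to find two returns within $\eta_2$ of each other you need more than $\mathcal{N}(\eta_2/2)$ returns available, so the orbit segment you work inside (and hence the period of the resulting pseudo-orbit) must already have length of order $\eta_2^{-d}$, with $d$ the box dimension. That is exactly the same polynomial trade-off between period and error as in the single-step version, so the iteration as described gains nothing. The mechanism that actually produces an exponentially small pseudo-orbit error relative to the period is \textbf{ASP}(1), which your construction never uses: if two genuine orbit segments stay $\delta$-close for $2n$ steps (a coarse, partition-level matching condition), they are automatically $O(e^{-\lambda n/2})$-close in the middle of the window. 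The paper codes $Z$ by a partition of diameter less than $\delta$, matches length-$2n$ symbolic blocks, and splices the middle segments $T^{[n/2]}x_i,\dots,T^{n+[n/2]-1}x_i$ into a periodic pseudo-orbit whose error is $2\delta C e^{-\lambda([n/2]-1)}$ --- exponentially small in $n$ --- before invoking \textbf{ASP}(2).

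Second, even granting the exponential error, one must control how many blocks are needed to close up the symbolic pseudo-orbit; a naive pigeonhole on length-$2n$ words gives a period of order $e^{2nh}$ (with $h$ the entropy of the coded system), which can overwhelm the gain $e^{-\lambda\alpha n/2}$. The paper handles this with Lemma \ref{lemma-1} (Lemma 5 of \cite{BQ}): a shift of finite type with $M$ symbols and entropy $h$ has a periodic point of period at most $1+Me^{1-h}$, which applied to the $n$-block recoding yields a symbolic period $p_n\le 1+e\,e^{-nh}\sharp W_n$ growing subexponentially. The resulting true periodic orbit has period dividing $np_n$ and satisfies $d_{\alpha,Z}(\mathcal{O}_n)\le np_n(2\delta CL)^\alpha e^{-\lambda\alpha([n/2]-1)}$, whence $n^k\min_{\mathcal{O}\in\mathcal{O}^n}d_{\alpha,Z}(\mathcal{O})\to 0$ for every $k$. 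Without both \textbf{ASP}(1) and this entropy lemma, your iteration has no route to the required super-polynomial relationship between period and distance to $Z$, so the proof as written is incomplete at its central step.
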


\begin{proof}
We follow the arguments in \cite{BQ}. Before going to the proof, we need to state two technical results first. The following lemma is Lemma 5 of \cite{BQ}.

Let $\Sigma_n=\{0,1,2,\cdot,n-1\}^{\mathbb{N}}$ and $\sigma$ is a shift on $\Sigma_n$. Assume $F$ is a subset of $\bigcup_{i\ge 1}\{0,1,2,\cdot,n-1\}^{i}$, then the subshift with forbidden $F$ is noted by $(Y_F,\sigma)$ where
$$Y_F=\{x\in \{0,1,2,\cdot,n-1\}^{\mathbb{N}},w\text{ does not appear in } x\text{ for all }w\in F\}.$$
	\begin{lem}[\cite{BQ}]\label{lemma-1} Suppose that $(Y,\sigma)$ is a shift of  finite type $($with forbidden words of length 2$)$ with $M$ symbols and entropy $h$. Then $(Y,\sigma)$ contains a periodic point of period at most $1+Me^{(1-h)}$.
\end{lem}

\begin{lem}\label{lemma-2}
Let $(X,T)$ be a dynamical system satisfying H1) and {\bf ASP}, and $Z$ be a nonempty subset of $X$.  Then for any $0<\alpha\le 1$, $0<\eta\le \delta$, $n\ge 0$, and periodic $\eta$-pseudo-orbit $\widetilde{\mathcal{O}}$ of $(X,T)$ with period $n$, there exists a periodic orbit $\mathcal{O}$ of $(X,T)$ with period $m$ such that $m|n$ and
	$$d_{\alpha,Z}(\mathcal{O})\le d_{\alpha,Z}(\widetilde{\mathcal{O}_n})+n(L\eta) ^\alpha,$$
	where $\delta,L$ are the constants as in {\bf ASP}.
\end{lem}

\begin{proof}For $n\ge 1$, assume $\widetilde{\mathcal{O}}=\{x_i\}_{i=0}^{n-1}$ is a periodic $\eta$-pseudo-orbit with period $n$. By {\bf ASP}, there is a periodic orbit $\mathcal{O}=\{x,Tx,\cdots,T^{m-1}x\}$ such that $m|n$ and $d(x_i,T^ix)\le L\eta$ for $0\le i\le n-1$. Therefore, for $0<\alpha\le1$, one has
	$$d_{\alpha,Z}(\mathcal{O})=\sum_{i=0}^{m-1}d^\alpha(T^ix,Z)\le \sum_{i=0}^{n-1}(d(x_i,Z)+L\eta)^\alpha\le d_{\alpha,Z}(\widetilde{\mathcal{O}})+n(L\eta)^\alpha.$$
	This ends the proof.
	\end{proof}

Now, we are ready to prove Lemma \ref{lemma-3}.

Fix $\alpha, k, Z$ as in the lemma and  $\lambda,\delta,C,L$ are the constants as in {\bf ASP}. Let  $\mathcal{P}=\{P_1,P_2,\cdots,P_m\}$ be a finite partition of $X$ with diameter smaller than $\delta$. For $x\in X$, $\widehat{ x}\in\{1,2,3,\cdots,m\}^{\mathbb{N}}$ is defined by
	$$\widehat{x}(n)=j\text{ whenever } T^nx\in P_j\text{ and } n\in\mathbb{N}.$$
	Denote $\widehat{Z}=\{\widehat{x}: x\in Z\}$ and $W_n$ is the collection of
	length $n$ strings that appear in $\widehat{Z}$. Then  $K_n :=e^{-nh}\sharp W_n$ grows at a subexponential rate, i.e.
	$$\lim_{n\to\infty}\frac{\log K_n}n= 0,$$
	where $h=h_{top}(\overline{\widehat{Z}},\sigma)$.  Denote
	$$Y_n=\{y_0y_1y_2\cdots\in W_n^\mathbb{N}:y_i\in W_{n} \text{ and } y_iy_{i+1}\in W_{2n}\text{ for all }i\in\mathbb{N}\}.$$
	Let $(Y_n,\sigma_n)$ be the 1-step shift of finite type $W_n$. Then $(\overline{\widehat{Z}},\sigma^n)$can be considered as  a subsystem of $(Y_n,\sigma_n)$. Hence $$h_{top}(Y_n,\sigma_n)\ge h_{top}(\overline{\widehat{Z}},\sigma^n)=nh.$$ Thus from Lemma \ref{lemma-1}, the shortest periodic orbit in $Y_n$ is at most $1+e^{1-nh}\sharp W_n=1+eK_n$.  Denote one of the shortest periodic orbit in $Y_n$ by $z_1z_2\cdots z_{p_n}z_1z_2\cdots$ for some $p_n\le 1+eK_n$ and $z_i\in W_n, i=1,2,\cdots,p_n$.
	
	Now we construct a periodic pseudo-orbit in $Z$. For $i=1,2,\cdots p_n$, there is $x_i\in Z$ such that the leading length $2n$ string of $\widehat{x_i}$ is $z_iz_{i+1}$ (Note $z_{{p_n}+1}=z_1$). Hence, $\widehat{T^nx_i}$ and $\widehat{x_{i+1}}$ have the same leading length $n$ string which implies $d(T^{n+j}x_i,T^{j}x_{i+1})<\delta$ $($Note $x_{p_n+1}=x_1$$)$ for $j=0,1,2,\cdots,n-1$.  By {\bf ASP},
	\begin{align*}
	d(T^{n+[\frac{n}{2}]}x_i,T^{[\frac{n}{2}]}x_{i+1})&<Ce^{-\lambda\min([\frac{n}{2}],n-1-[\frac{n}{2}])}\cdot(d(T^{n}x_i,x_{i+1})+d(T^{2n-1}x_i,T^{n-1}x_{i+1}))\\
	&\le 2\delta Ce^{-\lambda([\frac{n}{2}]-1)}.
	\end{align*}
	Therefore, we select the periodic $2\delta Ce^{-\lambda([\frac{n}{2}]-1)}$-pseudo-orbit $\widetilde{\mathcal{O}_n}$ in $Z$ with periodic $n{p_n}$ by
	$$\{T^{[\frac{n}{2}]}x_1,T^{[\frac{n}{2}]+1}x_1,\cdots,T^{n+[\frac{n}{2}]-1}x_1,T^{[\frac{n}{2}]}x_2,\cdots,T^{n+[\frac{n}{2}]-1}x_2,T^{[\frac{n}{2}]}x_3,\cdots,T^{n+[\frac{n}{2}]-1}x_{p_n}\}.$$

	By {\bf ASP}, while $n$ is sufficiently large, we have a periodic orbit $\mathcal{O}_n$ with period $m_n$, such that $m_n|n{p_n}$
	$$d_{\alpha,Z}(\mathcal{O}_n)\le d_{\alpha,Z}(\widetilde{\mathcal{O}_n})+n{p_n}(2\delta CL)^\alpha e^{-\lambda\alpha([\frac{n}{2}]-1)}=n{p_n}(2\delta C L)^\alpha e^{-\lambda\alpha([\frac{n}{2}]-1)},$$
where we used lemma \ref{lemma-2}.	Since $p_n\le 1+eK_{n}$ and $K_n$ grows at a subexponential rate, we obtain
\begin{equation*}
\begin{aligned}
\limsup_{n\to\infty}n^k\min_{\mathcal{O}\in \mathcal{O}^n}d_{\alpha,Z}(\mathcal{O})
&\le \limsup_{n\to\infty} (\max\{ip_i:{1\le i\le n+1}\})^k\cdot n{p_n} \delta ^\alpha e^{-\lambda \alpha([\frac{n}{2}]-1)}=0.
\end{aligned}
\end{equation*}
This ends the proof.
	\end{proof}

Let $(X,T)$ satisfy H1) and ASP, $u : X\rightarrow \mathbb{R}$ and $\psi: X \rightarrow \mathbb{R}^+$ are continuous. Given
$\a\in(0,1]$,   $\tilde L>0$ and  a $T$-forward-invariant non-empty subset $Z\subset X$. Now, we are ready to construct the required periodic orbit in Proposition \ref{P:GoodPer} satisfying (\ref{E:GapCond0}). Before the rigorous proof, we firstly introduce the idea of the construction in a vague way: One can start with a periodic orbit $\mc O_0$ with long enough period  $n$ and a good approximation to $Z$ (say $d_{\a,Z}(\mc O_0)<n^{-k} $ for some large $k$); Once the gap of $\mc O_0$, $D(\mc O_0)$, is too small to meet the requirement, $\mc O_0$ can be decomposed into two pseudo periodic orbits, one of which has at most half of the original period $n$; Such pseudo orbits will provide a nearby periodic orbit with same period by {\bf ASP}, say $\mc O_1$;  One can show that the ratio $\frac{d_{\a,Z}(\mc O_1)}{ d_{\a,Z}(\mc O_0)}$ is bounded by a constant depending on system constants and $\tilde L$ only rather than dependending on $d_{\a,Z}(\mc O_0)$; Note that the operation of decomposing periodic orbits into periodic orbits with period halved can be done at most $\log_2 n$ times; Therefore, by adjusting the largeness of $n,k$, the above process will end at either a periodic orbit meet the requirement of Proposition \ref{P:GoodPer} or a fixed point, both of which will clearly accomplish the proof.

Let $C,L,\l,\d$ be as in {\bf ASP}. Take $k\in \mb N$ large enough, on which the condition will be proposed later. By Lemma \ref{lemma-3}, there exists a periodic orbit $\mc O_0$ of $(X,T)$ with period $n$ large enough such that
\begin{equation}\label{E:c-0}
d_{\a,Z}(\mc O_0)<\tilde L_0n^{-k}\ll \d,
\end{equation}
where $\tilde L_0=1$. If $D^\a(\mc O_0)>\tilde Ld_{\a,Z}(\mc O_0)$, the proof is done. Otherwise, one has that
\begin{equation}\label{E:c-1}
D^\a(\mc O_0)\le \tilde Ld_{\a,Z}(\mc O_0)<\tilde L \tilde L_0 n^{-k},
\end{equation}
which is required to be smaller than $\d^\a$ by choosing $n,k$ large enough. Therefore, there are $y\in \mc O_0$ and $1\le n_1\le n-1$ such that
$$d(y,T^{n_1}y)< (\tilde L \tilde L_0)^{\frac1\a}n^{-\frac k\a}< \d.$$
We split the periodic orbit $\mc O_0$ into two pieces of orbit by
\begin{align*}
&\mc Q^0_0=\{y,Ty, \cdots, T^{n_1-1}y\};\\
&\mc Q^1_0=\{T^{n_1}y,T^{n_1}y, \cdots, T^{n-1}y\}.
\end{align*}
Note that each of the above segment of orbit induces a $\d$-pseudo periodic orbit, and moreover, period of one such $\d$-pseudo periodic orbit does not exceed $\frac n2 $. Without losing any generality, we assume that $n_1\le \frac n2$.

By {\bf ASP}, there exists a periodic orbit
$$\mc O_1=\{z_1,Tz_1,\cdots, T^{m_1-1}z_1\}$$
such that $T^{m_1}z=z$, $m_1|n_1$ and $d(T^iy, T^iz_1)\le L(\tilde L \tilde L_0)^{\frac1\a}n^{-\frac k\a}$ for all $0\le i\le n_1-1$. Therefore, by {\bf ASP} again,  for all $0\le i\le n_1-1$,
$$d(T^iy, T^iz_1)\le Ce^{-\l\min\{i,n_1-i\}}2L(\tilde L \tilde L_0)^{\frac1\a}n^{-\frac k\a},$$
which $L(\tilde L \tilde L_0)^{\frac1\a}n^{-\frac k\a}$ is required to be smaller than $\d$, that is, $L^{\a}(\tilde L \tilde L_0)n^{-k}<\d^{\a}$ by choosing $n,k$ large enough.

Hence,
\begin{align}\begin{split}\label{E:c-2}
d_{\a,Z}(\mc O_1)\le&d_{\a,Z}(\mc Q^0_0)+\sum_{i=0}^{n_1-1}\left(Ce^{-\l\min\{i,n_1-i\}}2L\right)^\a(\tilde L \tilde L_0)n^{-k}\\
\le&d_{\a,Z}(\mc O_0)+\frac{2(2CL)^\a}{1-e^{-\l\a}}(\tilde L \tilde L_0) n^{-k}\\
<&\tilde L_1 n^{-k},
\end{split}
\end{align}
where $\tilde L_1=(1+\frac{2(2CL)^\a}{1-e^{-\l\a}}\tilde L)\tilde L_0= 1+\frac{2(2CL)^\a}{1-e^{-\l\a}}\tilde L$.

If $D^\a(\mc O_1)> \tilde L d_{\a,Z}(\mc O_1)$, the proof is done. Otherwise, one repeats the above operation to get another periodic orbit $\mc O_2$ with period $\le \frac{n}{4}$. Note that, in this case, in order to make the above process repeatable one only need $$\tilde L\tilde L_1n^{-k}<\d^\a \text{ and }L^{\a}\tilde L\tilde L_1n^{-k}<\d^\a,$$ which is doable by choosing $n,k$ large enough. Suppose the above operation can be executed $m$ times resulting at a periodic orbit $\mc O_m$. Then, by applying the same argument inductively, one has that
$$d_{\a,Z}(\mc O_m)< \tilde L_m n^{-k},$$
where $\tilde L_m=(1+\frac{2(2CL)^\a}{1-e^{-\l\a}}\tilde L)\tilde L_{m-1}=\cdots={\tilde L_1}^m$.
Since every operation will (at least) halve the period of the resulting periodic orbit, such process has to end before $\left(\left[\frac{\log n}{\log 2}\right]+1\right)$-th operation.  In order to make each operation doable, one only need $n,k$ satisfying the following condition
$$\tilde L\tilde L_1^{\frac{\log n}{\log 2}+1}n^{-k}=\tilde L \tilde L_1 n^{-k+\frac{\log \tilde L_1}{ \log 2}}<\d^\a \text{ and }L^{\a}\tilde L\tilde L_1^{\frac{\log n}{\log 2}+1}n^{-k}=L^{\a}\tilde L \tilde L_1 n^{-k+\frac{\log \tilde L_1}{ \log 2}}<\d^\a.$$
Note that, after the last operation being executed, there are two possible cases: the resulting periodic orbit either meet the requirement of Proposition \ref{P:GoodPer} or is a fixed point of $T$. In the second case,  by the definition of $D(\mc O)$ (\ref{E:DefDO}), requirement of  Proposition \ref{P:GoodPer} is also met once one additionally choose $n,k$ to satisfy that ${\tilde L_1}^{\frac{\log n}{\log 2}+1}n^{-k}<\delta^{\a}$, that is, ${\tilde L_1} n^{-k+\frac{\log \tilde L_1}{ \log 2}+1}<\delta^{\a}$.

The proof of Proposition \ref{P:GoodPer} is completed.


\subsubsection{ Proof of Proposition \ref{P:OpenDencePerMin}}\label{S:ProofProp2}


Before going to the proof of Proposition \ref{P:OpenDencePerMin}, we need to introduce a technical lemma and some notions  that play important roles in later proof.
\begin{lem}\label{revael} 
Let $(X,T)$ be a dynamical system satisfying H1) and  {\bf MCGBP}.  Then for all $0<\alpha\le 1$, strictly positive $\psi\in \mathcal{C}^{0,\alpha}({X})$ and $u\in \mathcal{C}^{0,\alpha}({X})$, there is $v\in \mathcal{C}^{0,\alpha}({X})$ such that
	\begin{itemize}
		\item[(1)] $u_K-v\circ T^K+v-\beta(u;\psi,{X},T)\psi_K\ge 0;$
		\item[(2)] $Z_{u,\psi}\subset\{x\in {X}:(u_K-v\circ T^K+v-\beta(u;\psi,{X},T)\psi_K)(x)=0\},$
	\end{itemize}
where $K=K(\alpha)$ is the natural number as in {\bf MCGBP} and $Z_{u,\psi}$ is given by (\ref{E:Z_u,psi}).
	\end{lem}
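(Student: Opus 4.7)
My plan is to reduce the weighted statement to a direct application of \textbf{MCGBP} by absorbing the weight into the observable. Consider the auxiliary function
$$\tilde u := u - \beta(u;\psi,X,T)\,\psi \in \mathcal{C}^{0,\alpha}(X).$$
By the definition of $\beta(u;\psi,X,T)$ one has $\int u\,d\nu \ge \beta(u;\psi,X,T)\int\psi\,d\nu$ for every $\nu\in \mathcal{M}(X,T)$, with equality exactly on $\mathcal{M}_{\min}(u;\psi,X,T)$. Because $\psi>0$, this says
$$\beta(\tilde u; X,T) \;=\; \min_{\nu\in \mathcal{M}(X,T)} \int \tilde u\,d\nu \;=\; 0,$$
and that the minimum is attained precisely on the $(u,\psi)$-minimizing measures.

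Now I apply \textbf{MCGBP} (with $K=K(\alpha)$) to $\tilde u$: there exists $v\in \mathcal{C}^{0,\alpha}(X)$ with
$$\tilde u_K - v\circ T^K + v - \beta(\tilde u;X,T) \;\ge\; 0.$$
Using $\beta(\tilde u;X,T)=0$ and expanding $\tilde u_K = u_K - \beta(u;\psi,X,T)\,\psi_K$ yields exactly
$$\varphi \;:=\; u_K - v\circ T^K + v - \beta(u;\psi,X,T)\,\psi_K \;\ge\; 0,$$
which is conclusion (1). Note this step produces a $v\in\mathcal{C}^{0,\alpha}(X)$ because \textbf{MCGBP} is stated on H\"older classes.

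For conclusion (2), pick any $\mu\in \mathcal{M}_{\min}(u;\psi,X,T)$. Integrating $\varphi$ against $\mu$ and using $T^K$-invariance of $\mu$ (so $\int v\circ T^K d\mu=\int v\,d\mu$, $\int u_K\,d\mu = \int u\,d\mu$, $\int \psi_K\,d\mu = \int\psi\,d\mu$) gives
$$\int \varphi\,d\mu \;=\; \int u\,d\mu - \beta(u;\psi,X,T)\int\psi\,d\mu \;=\; 0,$$
the last equality by the defining property of a $(u,\psi)$-minimizing measure. Since $\varphi\ge 0$ is continuous and integrates to $0$ against $\mu$, it vanishes on $\mathrm{supp}(\mu)$. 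Taking the union over all $\mu\in \mathcal{M}_{\min}(u;\psi,X,T)$ and the closure, and using continuity of $\varphi$, gives $\varphi\equiv 0$ on $Z_{u,\psi}$, which is (2).

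There is no real obstacle: the only subtlety is checking that the MCGBP applied to $\tilde u$ really produces a H\"older $v$ with the correct index and that $\beta(\tilde u;X,T)=0$, both of which are immediate from the definitions. The invariance argument for (2) is standard.
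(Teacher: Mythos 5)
Your proof is correct and follows essentially the same route as the paper: the authors likewise apply \textbf{MCGBP} to the auxiliary observable $u-\beta(u;\psi,X,T)\psi$ after checking that its unweighted minimum ergodic average is zero (using $\psi>0$), and then obtain (2) by integrating the nonnegative continuous coboundary-corrected function against a $(u,\psi)$-minimizing measure and invoking $T$-invariance. No discrepancies worth noting.
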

	\begin{proof}(1). By {\bf MCGBP}, we only need to show that $\beta(u-\beta(u;\psi,{X},T)\psi;X,T)=0$, that is,
	$$\min_{\mu\in\mathcal{M}({X},T)} \int u-\beta(u;\psi,{X},T)\psi d\mu=0.$$
It is immediately  from the fact
	$$\min_{\mu\in\mathcal{M}({X},T)}\frac{\int ud\mu}{\int \psi d\mu}=\beta(u;\psi,{X},T),$$
	where we used the assumption $\psi$ is strictly positive.
	
	(2).  By a probability measure $\mu\in\mathcal{M}_{min}(u;\psi,{X},T)$, we have
	$$\int u_K-v\circ T^K+v-\beta(u;\psi,{X},T)\psi_Kd\mu=\int u-\beta(u;\psi,{X},T)\psi d\mu=0.$$
	Combining (1) and the fact $u_K-v\circ T^K+v-\beta(u;\psi,{X},T)\psi_K$ is continuous, we have
	$$supp(\mu)\subset \{x\in{X}:(u_K-v\circ T^K+v-\beta(u;\psi,{X},T)\psi_K)(x)=0\}.$$
	Therefore, by the continuity of $u_K-v\circ T^K+v-\beta(u;\psi,{X},T)\psi_K$, one has that  $$Z_{u,\psi}=\overline{\cup_{\mu\in\mathcal{M}_{min}(u;\psi,{X},T)}supp(\mu)}\subset\{x\in{X}:(u_K-v\circ T^K+v-\beta(u;\psi,{X},T)\psi_K)(x)=0\}.$$ This ends the proof.
	\end{proof}

\begin{rem}\label{rem-reveal}
For convenience, in the following text, if we need to use lemma \ref{revael}, we use $\bar u$ to represent
	$u_K-v\circ T^K+v-\beta(u;\psi,{X},T)\psi_K$ for short. Then, $\bar u\ge 0$ and $Z_{u,\psi}\subset\{x\in{X}:\bar u(x)=0\}.$
	\end{rem}

Fix $\varepsilon, \alpha,\psi,u$ as in Proposition \ref{P:OpenDencePerMin},  $K,\bar u$ as in remark \eqref{rem-reveal}, and $C$, $\delta$ as in {\bf ASP}. By remark \ref{rem-reveal}, one has that
	$$\bar u\ge0\text{ and }Z_{u,\psi}\subset\{x\in{X}:\bar u(x)=0\}.$$
	In stead of investigating the minimizing measure of $\bar u+\varepsilon d^\alpha(\cdot,\mathcal{O})+h$, we consider a modified observable $G:=\bar u+\varepsilon d^\alpha(\cdot,\mathcal{O})+h-a_\mathcal{O}\psi_K$ which will provide more conveniences, where
	$$a_\mathcal{O}:=\frac{\sum_{y\in\mathcal{O}}\left(\bar u(y)+\varepsilon d^\alpha(y,\mathcal{O})+h(y)\right)}{\sum_{y\in\mathcal{O}}\psi_K(y)}.$$
	Clearly $\int Gd\mu_\mathcal{O}=0$.
	 Note that, by the definition of $u_K\left(:=\frac1K\sum_{i=0}^{K-1}u\circ T^i\right)$, for all $\mu\in\mathcal{M}({X},T)$, one has that
	\begin{align*}\frac{\int u+\varepsilon d^\alpha(\cdot,\mathcal{O}) +hd\mu}{\int \psi d\mu}&=\frac{\int u_K+\varepsilon d^\alpha(\cdot,\mathcal{O}) +hd\mu}{\int \psi_K d\mu}\\
	&=\frac{\int \bar u+\varepsilon d^\alpha(\cdot,\mathcal{O}) +hd\mu}{\int \psi_K d\mu}+\beta(u;\psi,{X},T)\\
	&=\frac{\int Gd\mu}{\int \psi d\mu}+a_\mathcal{O}+\beta(u;\psi,{X},T),
	\end{align*}
	where we recall that $\beta(u;\psi,{X},T)$ is the minimum ergodic average given by (\ref{equ_ratiomimimalerg}).

Then, in order to show that $\mu_{\mathcal{O}}\in\mathcal{M}_{min}(u+\varepsilon d^\alpha(\cdot,\mathcal{O})+h;\psi,{X},T)$,
it is enough to show that $\mu_{\mathcal{O}}\in\mathcal{M}_{min}(G;\psi,{X},T)$. Since $\psi$ is strictly positive and $\int Gd\mu_\mathcal{O}=0$, it is enough to show that
\begin{equation}\label{E:PositivityIntG}
\int Gd\mu>0\text{ for all }\mu\in\mathcal{M}^e({X},T)\setminus\{ \mu_{\mc O}\}.
\end{equation}

Denote that
$$L_{\mc O}=\frac{D^\alpha(\mathcal{O})}{d_{\alpha,Z_{u,\psi}}(\mathcal{O})}.$$
	
Thus one has an equivalent statement of Proposition \ref{P:OpenDencePerMin} under the same setting as the following:	
\begin{lem}\label{L:PositivityIntG}
There exist $\hat L,\hat \d>0$ which depend on $\ve,\a, u, \psi$ and system constants only such that if $L_{\mc O}>\hat L$, $\|h\|_\a\le 10\e$ and $\|h\|_0<\frac{D^\a(\mc O)}{\sharp \mc O}\cdot \hat \d$, then (\ref{E:PositivityIntG}) holds.
\end{lem}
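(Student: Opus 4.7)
The plan is to combine a quantitative bound on $a_{\mc O}$ (derived from the hypothesis $L_{\mc O}>\hat L$) with an $\mathbf{ASP}$-based rigidity statement, and then to exploit the $\varepsilon$-penalty in order to obtain the required strict inequality $\int G\,d\mu>0$ for every ergodic $\mu\neq\mu_{\mc O}$.

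First, I would estimate $a_{\mc O}$. Since $\bar u\ge 0$, vanishes on $Z_{u,\psi}$, and is $\alpha$-H\"older by Lemma~\ref{revael}, one has $\bar u(y)\le [\bar u]_\alpha d^\alpha(y,Z_{u,\psi})$ for every $y\in\mc O$; summing and using $L_{\mc O}>\hat L$ gives $\sum_{y\in\mc O}\bar u(y)\le [\bar u]_\alpha d_{\alpha,Z_{u,\psi}}(\mc O)<[\bar u]_\alpha D^\alpha(\mc O)/\hat L$. Combined with $\sharp\mc O\cdot\|h\|_0<D^\alpha(\mc O)\hat\delta$ and $\sum_{y\in\mc O}\psi_K(y)\ge (\min\psi)\sharp\mc O$, this yields
$$|a_{\mc O}|\le \frac{D^\alpha(\mc O)}{(\min\psi)\sharp\mc O}\left(\frac{[\bar u]_\alpha}{\hat L}+\hat\delta\right),$$
so both $\|h\|_0$ and $|a_{\mc O}|\|\psi_K\|_0$ are of order $D^\alpha(\mc O)/\sharp\mc O$ times a quantity that can be made arbitrarily small by enlarging $\hat L$ and shrinking $\hat\delta$.

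Next, I would use $\mathbf{ASP}$~(Part~(1)) as a rigidity tool. Fix $\eta\in(0,\delta)$ with $\eta<D(\mc O)/(2\max(1,\mathrm{Lip}\,T))$, so that the balls $\{B(y,\eta):y\in\mc O\}$ are pairwise disjoint and the nearest-point projection $\pi:B_\eta(\mc O)\to\mc O$ commutes with $T$ on its domain. For any ergodic $\mu$ with $\mathrm{supp}(\mu)\subset B_\eta(\mc O)$, applying the exponential separation estimate to the pair $(x,\pi(x))$ at a $\mu$-generic $x$ yields $d(T^n x,T^n\pi(x))\le 2C\eta e^{-\lambda n}\to 0$, and Birkhoff's theorem then forces $\mu=\mu_{\mc O}$. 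Hence for any ergodic $\mu\neq\mu_{\mc O}$, $\mathrm{supp}(\mu)\not\subset B_\eta(\mc O)$ and in particular $\int d^\alpha(\cdot,\mc O)\,d\mu>0$.

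The technical crux---and the main obstacle---is upgrading this qualitative positivity to the strict inequality $\int G\,d\mu>0$. A naive pointwise comparison
$$G(x)-G(\pi(x))\ge \bigl(\varepsilon-[\bar u]_\alpha-[h]_\alpha-|a_{\mc O}|[\psi_K]_\alpha\bigr)d^\alpha(x,\mc O)$$
carries a coefficient that may be negative, since $[h]_\alpha$ is permitted to reach $10\varepsilon$ and $[\bar u]_\alpha$ is a fixed positive constant, so the penalty does not beat $h$'s H\"older variation pointwise. I would therefore work at the integrated level, decomposing the orbit of a $\mu$-generic point into shadow intervals (inside $B_\eta(\mc O)$, along which the nearest-point index on $\mc O$ advances by $+1$ per $T$-step) and excursion intervals. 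Each nontrivial excursion must traverse a gap of size at least $D(\mc O)$, contributing at least of order $D^\alpha(\mc O)$ to the Birkhoff sum of $d^\alpha(\cdot,\mc O)$; $\mathbf{ASP}$ Part~(2), applied in the spirit of the Bressaud--Quas closing lemma, excludes long shadow sequences from corresponding to a distinct periodic orbit. Combining the resulting lower bound on $\int d^\alpha(\cdot,\mc O)\,d\mu$ with the $a_{\mc O}$ and $\|h\|_0$ control of the first step, and choosing $\hat L$ sufficiently large and $\hat\delta$ sufficiently small in terms of $\varepsilon,\alpha,u,\psi$ and the system constants, then yields $\int G\,d\mu>0$ as required.
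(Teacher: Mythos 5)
Your overall strategy matches the paper's (estimate $a_{\mc O}$, use part (1) of {\bf ASP} to control shadowing segments, cash in the $\ve$-penalty on excursions), and your first two steps are essentially correct. But the step you yourself identify as the crux contains a genuine gap, and the way you propose to close it would not work. A lower bound on $\int d^\a(\cdot,\mc O)\,d\mu$ cannot simply be ``combined with the $a_{\mc O}$ and $\|h\|_0$ control'' at the integrated level: the losses coming from $h-a_{\mc O}\psi_K$ are incurred at \emph{every} time step, including throughout arbitrarily long shadowing stretches, whereas the gain $\ve\, d^\a(\cdot,\mc O)$ is collected only during excursions. An ergodic $\mu\neq\mu_{\mc O}$ may spend an arbitrarily large fraction of its time $\eta$-shadowing $\mc O$, so $\int d^\a(\cdot,\mc O)\,d\mu$ can be arbitrarily small (it admits no lower bound of order $D^\a(\mc O)/\sharp\mc O$), and the crude inequality $\int G\,d\mu\ \ge\ \ve\int d^\a(\cdot,\mc O)\,d\mu-\|h\|_0-|a_{\mc O}|\|\psi\|_0$ need not be positive. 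Your appeal to part (2) of {\bf ASP} in the spirit of Bressaud--Quas is also not the relevant tool here: that ingredient is used to \emph{construct} $\mc O$ in Proposition \ref{P:GoodPer}, not in this lemma.

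What is missing is the mechanism that bounds the net loss over a single shadowing block by a constant \emph{independent of the block's length}. The paper achieves this by comparing the Birkhoff sums of $G$ along $z$'s orbit with those along the orbit of the nearest point $y_0\in\mc O$: (i) since a shadowing block starts and ends in the set $Area_1$ of radius $\bigl((|a_{\mc O}|\|\psi\|_0+\|h\|_0)/\ve\bigr)^{1/\a}$, the two-sided exponential estimate in {\bf ASP}(1) makes $\sum_n d^\a(T^nz,T^ny_0)$ a convergent geometric series, so the accumulated H\"older-variation loss from $\bar u$, $h$ and $a_{\mc O}\psi_K$ over the block is of order $(|a_{\mc O}|\|\psi\|_0+\|h\|_0)/\ve$ rather than of order the block length; and (ii) since $\int G\,d\mu_{\mc O}=0$, the sum $\sum_{n=0}^{m_2}G(T^ny_0)$ reduces to at most $\sharp\mc O$ leftover terms and is bounded below by $-\sharp\mc O\,(|a_{\mc O}|\|\psi\|_0+\|h\|_0)$. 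Both losses are then absorbed by the single gain $\ve\bigl(D(\mc O)/(2Lip_T)\bigr)^\a$ collected when the orbit first reaches distance about $D(\mc O)/(2Lip_T)$ from $\mc O$, once $\hat L$ is large and $\hat\d$ is small. Without this block-by-block bookkeeping your argument does not close; with it, your outline becomes essentially the paper's proof.
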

\begin{proof}
Put $Area_1:=\left\{x\in{X}:d(x,\mathcal{O})\le\left(\frac{|a_\mathcal{O}|\|\psi\|_0+\|h\|_0}{\varepsilon}\right)^{\frac{1}{\alpha}}\right\}.$
Note that
	\begin{align*}
    \begin{split}
	|a_\mathcal{O}|
	&=\left|\frac{\sum_{y\in\mathcal{O}}\left(\bar u(y)+h(y)\right)}{\sum_{y\in\mathcal{O}}\psi_K(y)}\right|\\
		&\le\frac{\sum_{y\in\mathcal{O}}\left(\|\bar u\|_\alpha d^\alpha(y,Z_{u,\psi})+\|h\|_0\right)}{\sum_{y\in\mathcal{O}}\psi_{min}}\\
	&= \frac{\|\bar u\|_\alpha d_{\alpha,Z_{u,\psi}}(\mathcal{O})}{\sharp\mathcal{O}\psi_{min}}+\frac{\|h\|_0}{\psi_{min}}.
		\end{split}\end{align*}
Hence
\begin{align*}
\frac{\left(\frac{|a_\mathcal{O}|\|\psi\|_0+\|h\|_0}{\varepsilon}\right)^{\frac{1}{\alpha}} }{\frac{D(\mc O)}{2}}
&\le 2\left( \frac{\|\bar u\|_\alpha}{\epsilon \sharp\mathcal{O}\psi_{min}}\frac{1}{L_{\mc O}}+\frac{\|h\|_0}{\epsilon\psi_{min} D^\alpha(\mc O)}\right)^{\frac{1}{\alpha}}
\end{align*}
Particularly, when $L_{\mc O}>\frac{2(2Lip_T)^\alpha \|\bar u\|_\alpha}{\epsilon \sharp\mathcal{O}\psi_{min}}$ and $\|h\|_0<\frac{\epsilon\psi_{min} }{2(2Lip_T)^\alpha}D^\alpha(\mc O)$, one has
\begin{align*}
\left(\frac{|a_\mathcal{O}|\|\psi\|_0+\|h\|_0}{\varepsilon}\right)^{\frac{1}{\alpha}}<\frac{D(\mathcal{O})}{2Lip_T},
\end{align*}
where 
\begin{align}\label{lip-definition}
Lip_T=\left\{\begin{array}{ll}
1, & \text{ if } \sharp T(X)=1,\\
\max\left\{1,\sup_{x\neq y}\frac{d(Tx,Ty)}{d(x,y)}\right\}, & \text{ if } \sharp T(X)>1.
\end{array}
\right.
\end{align}
Thus in the choice of $\hat L$ and $\hat \d$, we will require  $\hat L\ge \frac{2(2Lip_T)^\alpha \|\bar u\|_\alpha}{\epsilon \psi_{min}} $ and  $\hat \d\le \frac{\epsilon\psi_{min} }{2(2Lip_T)^\alpha}$. Then  when $L_{\mc O}>\hat L$,  and $\|h\|_0<\frac{D^\a(\mc O)}{\sharp \mc O}\cdot \hat \d$, one has
\begin{align}\label{XXX-1}
\left(\frac{|a_\mathcal{O}|\|\psi\|_0+\|h\|_0}{\varepsilon}\right)^{\frac{1}{\alpha}}<\frac{D(\mathcal{O})}{2Lip_T}.
\end{align}

Firstly, we show that $Area_1$ contains all $x\in {X}$ with $G(x)\le0$.
	
	Given $x\notin Area_1$ when $Area_1\neq X$, we are to show that $G(x)>0$. There exists $y\in \mathcal{O}$ such that
	$$d(x,y)=d(x,\mathcal{O})> \left(\frac{|a_\mathcal{O}|\|\psi\|_0+\|h\|_0}{\varepsilon}\right)^{\frac{1}{\alpha}}.$$
Note that
	\begin{align}\label{X-2}
	\begin{split}
	\bar u+h-a_\mathcal{O}\psi_K\geq h-|a_\mathcal{O}|\psi_K \ge-|a_\mathcal{O}|\|\psi\|_0-\|h\|_0
	\end{split}
	\end{align}
 since $\bar u\ge0$ and $\|\psi_K\|_0\le \|\psi\|_0$. Then
	\begin{align*}G(x)&=\bar u(x)+\varepsilon d^\alpha(x,\mathcal{O})+h(x)-a_\mathcal{O}\psi_K\\
	&\ge \varepsilon d^\alpha(x,\mathcal{O})-|a_\mathcal{O}|\|\psi\|_0-\|h\|_0\\
	&> \varepsilon\cdot\left(\left(\frac{|a_\mathcal{O}|\|\psi\|_0+\|h\|_0}{\varepsilon}\right)^{\frac{1}{\alpha}}\right)^\alpha-|a_\mathcal{O}|\|\psi\|_0-\|h\|_0\\
	&=0.	
	\end{align*}

Secondly, we will show that by choosing $L_\mathcal{O}$, $\|h\|_\a$ and $\|h\|_0$ properly, for any $z\in {X}$ which is not a generic point of $\mu_\mathcal{O}$, there is an $m\in\mathbb{N}\cup\{0\}$ such that $\sum_{i=0}^{m}G(T^iz)>0$. The conditions proposed for $L_\mathcal{O}$, $\|h\|_\a$ and $\|h\|_0$ will provide the existence of the constants $\hat L$ and $\hat \d$ being requested by Proposition \ref{P:OpenDencePerMin}.

Suppose that $z\in {X}$ is not a generic point of $\mu_\mathcal{O}$. There are two cases. In the case $z\notin Area_1$ just note $m=0$ since $G(x)>0$ by claim 1.

In the case $z\in Area_1$, there is $y_0\in\mathcal{O}$ such that $$d(z,y_0)=d(z,\mathcal{O})\le\left(\frac{|a_\mathcal{O}|\|\psi\|_0+\|h\|_0}{\varepsilon}\right)^{\frac{1}{\alpha}}<\frac{D(\mathcal{O})}{2Lip_T}$$
by \eqref{XXX-1}.  If $d(T^kz,T^ky_0)\le\delta$ for all $k\in\mathbb{N}$,  by {\bf ASP},  we have
 $$d(T^kz,T^ky_0)\le Ce^{-\lambda k}(d(z,y_0)+d(T^{2k}z,T^{2k}y_0))\le 2Ce^{-\lambda k}\delta\to0\text { as }k\to+\infty.$$
 Then $z$ must be a generic point of $\mu_\mathcal{O}$ which is impossible by our assumption. Hence, there must be some $m_0\in\mathbb{N}$ such that
 $$d(T^{m_0}z,T^{m_0}y_0)>\delta>\frac{D(\mathcal{O})}{2} .$$
Let $m_1\in\mathbb{N}$ be the smallest time such that
\begin{align}\label{X-3}
\begin{split}
\frac{D(\mathcal{O})}{2Lip_T}\le d(T^{m_1}z,T^{m_1}y_0)\le\frac{D(\mathcal{O})}{2}<\delta.
\end{split}
\end{align}
 Then, we have  $$d(T^{m_1}z,\mathcal{O})=d(T^{m_1}z,T^{m_1}y_0)\ge\frac{D(\mathcal{O})}{2Lip_T}.$$
 Hence, by \eqref{X-2}, we have
 \begin{align}\label{X-5}
\begin{split}G(T^{m_1}z)&= \bar{u}(T^{m_1}z)+\varepsilon d^\alpha(T^{m_1}z,\mathcal{O})+h(T^{m_1}z)-a_\mathcal{O}\psi_K(T^{m_1}z)\\
 &\ge \varepsilon d^\alpha(T^{m_1}z,\mathcal{O})-|a_\mathcal{O}|\|\psi\|_0-\|h\|_0\\
 &\ge\varepsilon\cdot\left(\frac{D(\mathcal{O})}{2Lip_T}\right)^\alpha-|a_\mathcal{O}|\|\psi\|_0-\|h\|_0.\\
 \end{split}
\end{align}
 Let  $m_2\in\mathbb{N}$ the largest time with $0\le m_2<m_1$ such that
 $$T^{m_2}z\in Area_1.$$
 Then for all $m_2<n<m_1$ \begin{align}\label{X-12}
 G(T^nz)>0.\end{align}
 On the other hand, since $m_2<m_1$,  by (\ref{X-3}), one has that for all $0\le n\le m_2$
  $$d(T^{n}z,T^{n}y_0)\le \frac{D(\mathcal{O})}{2}<\delta.$$
  Then by {\bf ASP},  one has that for all $0\le n\le m_2$
  \begin{align*}
  d^\alpha(T^{n}z,T^{n}y_0)&\le C^\alpha e^{-\lambda\alpha \min(n,m_2-n)}\cdot(d(z,y_0)+d(T^{m_2}(z),T^{m_2}y_0))^\alpha\\
  &\le  2C^{\alpha}(e^{-\lambda\alpha n}+e^{-\lambda\alpha (m_2-n)})\frac{|a_\mathcal{O}|\|\psi\|_0+\|h\|_0}{\varepsilon}
\end{align*}
where we used the assumption that  $z,T^{m_2}z\in Area_1$.
 Therefore,
 \begin{align*}
\begin{split}\sum_{n=0}^{ m_2}d^\alpha(T^{n}z,T^{n}y_0)\le\frac{4C^{\alpha}}{1-e^{-\lambda\alpha}}\cdot\frac{|a_\mathcal{O}|\|\psi\|_0+\|h\|_0}{\varepsilon}.
 \end{split}
\end{align*}
Thus, one has that
 \begin{align}\label{X-6}
\begin{split}&\sum_{n=0}^{m_2}\left(G(T^nz)-G(T^ny_0)\right)\\
  =&\sum_{n=0}^{m_2}\left(\bar{u}(T^nz)+\varepsilon d^\alpha(T^nz,\mathcal{O})+h(T^nz)-\bar{u}(T^ny_0)-\varepsilon d^\alpha(T^ny_0,\mathcal{O})-h(T^ny_0)\right)\\
  &+a_\mathcal{O}\sum_{n=0}^{m_2}\left(\psi_K(T^ny_0)-\psi_K(T^nz)\right) \\
  \ge&\sum_{n=0}^{m_2}\left(\bar{u}(T^nz)-\bar{u}(T^ny_0)+h(T^nz)-h(T^ny_0)+a_\mathcal{O}(\psi_K(T^ny_0)-\psi_K(T^nz))\right)\\
  \ge& -(\|\bar u\|_\alpha+\|h\|_\alpha+|a_\mathcal{O}|\|\psi_K\|_\alpha)\sum_{n=0}^{m_2}d^\alpha(T^nz,T^ny_0)\\
  \ge& - (\|\bar u\|_\alpha+\|h\|_\alpha+|a_\mathcal{O}|\|\psi_K\|_\alpha) \cdot \frac{4C^{\alpha}}{1-e^{-\lambda\alpha}}\cdot\frac{|a_\mathcal{O}|\|\psi\|_0+\|h\|_0}{\varepsilon},
 \end{split}
\end{align}
where we used the fact $d^\alpha(\cdot,\mathcal{O})\ge0$ and $d^\alpha(T^ny_0,\mathcal{O})=0$.
Also note that
	\begin{align*}
	|a_\mathcal{O}|
	&=\left|\frac{\sum_{y\in\mathcal{O}}\left(\bar u(y)+h(y)\right)}{\sum_{y\in\mathcal{O}}\psi_K(y)}\right|\le \frac{ \|\bar u\|_0 + \|h\|_0 }{\psi_{min} }.	
   \end{align*}
Thus, one has that
 \begin{align}\label{XX-6}
\begin{split}&\sum_{n=0}^{m_2}\left(G(T^nz)-G(T^ny_0)\right)\\
  \ge& - (\|\bar u\|_\alpha+\|h\|_\alpha+\frac{ \|\bar u\|_0 + \|h\|_0 }{\psi_{min} }\|\psi_K\|_\alpha) \cdot \frac{4C^{\alpha}}{1-e^{-\lambda\alpha}}\cdot\frac{|a_\mathcal{O}|\|\psi\|_0+\|h\|_0}{\varepsilon}
 \end{split}
\end{align}
by \eqref{X-6}.

Note that $m_2+1=p\sharp\mathcal{O}+r$ for some nonnegative integer $p$ and $0\le r\le \sharp\mathcal{O}-1$, then  by \eqref{X-2}, one has that
 \begin{align}\label{X-7}
\begin{split}
  \sum_{n=0}^{m_2}G(T^ny_0)=\sum_{n=p\sharp\mathcal{O}}^{p\sharp\mathcal{O}+r-1}G(T^ny_0)\ge -\sharp\mathcal{O}\cdot (|a_\mathcal{O}|\|\psi\|_0+\|h\|_0)
 \end{split}
\end{align}
where we used $\int Gd\mu_\mathcal{O}=0$.

		Now we are ready to estimate $ \sum_{n=0}^{m_1}G(f^nz)$ as the following:
  \begin{align*}
 \hskip0.5cm &\sum_{n=0}^{m_1}G(T^nz)\\
 \ge& \sum_{n=0}^{m_2}G(T^nz)+G(T^{m_1}z)\\
 = &\left(\sum_{n=0}^{m_2}\left(G(T^nz)-G(T^ny_0)\right)\right)+\left(\sum_{n=0}^{m_2}G(T^ny_0)\right)+G(T^{m_1}z)\\
 \ge&-\left(\|\bar u\|_\alpha+\|h\|_\alpha+\frac{ \|\bar u\|_0 + \|h\|_0 }{\psi_{min} }\|\psi_K\|_\alpha\right) \cdot \frac{4C^{\alpha}}{1-e^{-\lambda\alpha}}\cdot\frac{|a_\mathcal{O}|\|\psi\|_0+\|h\|_0}{\varepsilon} &(\text{by }(\ref{X-6}))\\
 &-\sharp\mathcal{O}\cdot (|a_\mathcal{O}|\|\psi\|_0+\|h\|_0)&(\text{by }(\ref{X-7}))\\
  &+\varepsilon\cdot\left(\frac{D(\mathcal{O})}{2Lip_T}\right)^\alpha-|a_\mathcal{O}|\|\psi\|_0-\|h\|_0&(\text{by }(\ref{X-5}))\\
 \ge& L_1D^\a(\mc O)-L_2d_{\a, Z_{u,\psi}}(\mc O)-L_3\sharp\mathcal{O}\|h\|_0\\
 =&L_1 D^\a(\mc O)\Big(1-\frac{L_2}{L_1}\frac{1}{L_{\mathcal{O}}}-\frac{L_3\sharp\mathcal{O}}{L_1 D^\a(\mc O)}\|h\|_0\Big) ,
  \end{align*}
 where we take $\|h\|_\a\le 10\ve$ and $\|h\|_0\le \delta^\alpha$, and let
 \begin{align*}
 L_1&=\frac{\ve}{(2Lip_T)^\a},\\
 L_2&=\left(\frac{4C^{\alpha}(\|\bar u\|_\alpha+10\varepsilon+\frac{ \|\bar u\|_0 + \delta^\alpha }{\psi_{min} }\|\psi_K\|_\alpha) }{(1-e^{-\lambda\alpha})\psi_{min}\ve}+\frac{2\|\psi\|_0}{\psi_{min}}\right) \|\bar u\|_\alpha,\\
 L_3&=\left(\frac{4C^{\alpha}(\|\bar u\|_\alpha+10\varepsilon+\frac{ \|\bar u\|_0 + \delta^\alpha }{\psi_{min} }\|\psi_K\|_\alpha) }{(1-e^{-\lambda\alpha})\psi_{min}\ve}+\frac{2\|\psi\|_0}{\psi_{min}}\right)(1+\psi_{min}).
 \end{align*}
 Note that $L_1,L_2, L_3$ are positive and  depending on $\e, u, \psi$ and system constants only. By taking
 $$L_{\mc O}>2\frac{L_2}{L_1},\
  \|h\|_0<\frac{1}{2}\frac{L_1 D^\a(\mc O)}{L_3\sharp\mathcal{O}}, \text{ and } m=m_1,$$
one has that $\sum_{i=0}^mG(T^i z)>0$ provided that $z$ is not a generic point of $\mu_{\mc O}$.  Therefore, one possible choice for $\hat L$  and $\hat\d$ is to let $$\hat L=\max\left\{\frac{3L_2}{L_1},\frac{2(2Lip_T)^\alpha \|\bar u\|_\alpha}{\epsilon \psi_{min}} \right\}\text{ and } \hat\d=\min\left\{1,\frac{L_1}{3L_3},\frac{\epsilon\psi_{min} }{2(2Lip_T)^\alpha}\right\}.$$

Finally, we finish the proof by showing that when $L_{\mc O}>\hat L$, $\|h\|_{\a}\le 10\ve$, and $\|h\|_0\le \frac{D^\a(\mc O)}{\sharp \mc O}\cdot\hat\d$ with $\hat L$ and $\hat \d$ given above, the following holds
$$\int Gd\mu\ge0\text{ for all }\mu\in\mathcal{M}^e({X}, T).$$
Given a ergodic probability measure $\mu\in\mathcal{M}^e({X},T)$, in the case $\mu=\mu_{\mathcal{O}}$, we have $\int Gd\mu_{\mathcal{O}}=0$.  In the case $\mu\neq\mu_{\mathcal{O}}$, let $z$ be a generic point of $\mu$.
Note that  $z$ is not a generic point of $\mu_{\mathcal{O}}$. Thus there exists $m_1\in\mathbb{N}$ such that $$\sum_{n=0}^{m_1}G(T^nz)>0.$$
Note that  $T^{m_1+1}z$ is also not a generic point of $\mu_{\mathcal{O}}$. Thus we have $m_1+1\le m_2\in\mathbb{N}$ such that
$$\sum_{n=m_1+1}^{m_2}G(T^nz)>0.$$
By repeating the above process, we have $0\le m_1<m_2<m_3<\cdots$ such that
 $$\sum_{n=m_i+1}^{m_{i+1}}G(T^nz)>0\text{ for } i=0,1,2,3,\cdots,$$
 where $m_0=-1$. Therefore
\begin{align*}\int Gd\mu&=\lim_{i\to+\infty}\frac{1}{m_i+1}\sum_{n=0}^{m_i}G(T^nz)\\
&= \lim_{i\to+\infty}\frac{1}{m_i+1}\left(\sum_{n=0}^{m_1}G(T^nz)+\sum_{n=m_1+1}^{m_2}G(T^nz)+\cdots+\sum_{n=m_{i-1}}^{m_i}G(T^nz)\right)\\
&\ge 0.
\end{align*}
Hence, $\mu_{\mathcal{O}}\in\mathcal{M}_{min}(u+\varepsilon d^\alpha(\cdot,\mathcal{O})+h;\psi,{X},T)$.

In the end, (\ref{E:PositivityIntG}) holds ($\ve$ may need to be modified a bit) by noting that for any $\ve'>\ve_0$, the function
$G':=G+(\ve'-\ve)d(\cdot,\mc O)$ satisfies that
$$\int (G'-G)d\mu=\int (\ve'-\ve)d(\cdot,\mc O)d\mu>0\quad \forall \mu\in \mc M^e(X,T)\setminus \{\mu_{\mc O}\}.$$
This ends the proof.
\end{proof}
So far, we have accomplished the proof of Part I) of Theorem \ref{T:MainResult}.

\subsection{Proof of Part II) of Theorem \ref{T:MainResult}}\label{S:Part II)}
We will prove the following technical proposition which together with Proposition \ref{P:GoodPer}  imply the Part II) of Theorem \ref{T:MainResult}.
\begin{prop}\label{prop-3}
Let $(M,f)$ be a dynamical system on a smooth compact manifold $M$. Assume that $(X,T)$ is a subsystem of $(M,f)$, which satisfies {\bf ASP} and {\bf MCGBP}, and $T:X\rightarrow X$ is Lipschitz continuous. Then for  $0<\varepsilon<1$, $u\in \mathcal{C}^{1,0}({M})$ and strictly positive $\psi\in\mathcal{C}^{0,1}({M})$, there exist positive numbers $\hat L_1,\hat \d_1>0$ depending on $\ve,\psi,u$ and system constants only, and $\hat \d_1'>0$ depending on $\psi,u$ and system constants only (independent on $\ve$) such that the following hold: if a periodic orbit $\mathcal{O}$ of $(X,T)$ meets the following comparison condition
	\begin{align}\label{condition-2}
	D(\mathcal{O})>
\hat L_1 d_{1,Z_{u,\psi, T}}(\mathcal{O}),
	\end{align}
	then there is a $w\in \mathcal{C}^\infty(M)$ with $$\|w\|_0<\hat \d_1'\ve\text{ and }\|D_xw\|_0<2\varepsilon$$
	such that the probability measure
	$$\left\{\mu_{\mathcal{O}}:=\frac{1}{\sharp\mathcal{O}}\sum_{x\in\mathcal{O}}\delta_{x}\right\}=\mathcal{M}_{min}\left((u+w+h)|_X;\psi|_X,{X},T\right),$$
	whenever $h\in\mathcal{C}^{1,0}(M)$ satisfies $\|D_xh\|_0<5\varepsilon$ and
	$\|h\|_0<\frac{D(\mc O)}{\sharp \mc O}\cdot \hat \d_1$.
\end{prop}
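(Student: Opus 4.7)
The plan is to reduce Proposition \ref{prop-3} to the Lipschitz case handled in Lemma \ref{L:PositivityIntG} by constructing a concrete smooth ``bump'' function $w\in\mathcal{C}^\infty(M)$ playing the role of the Hölder perturbation $\varepsilon d(\cdot,\mathcal{O})$ from Proposition \ref{P:OpenDencePerMin}: nonnegative, vanishing exactly on $\mathcal{O}$, with $\|w\|_0$ of order $\varepsilon$ and $\|D_xw\|_0\le 2\varepsilon$. Once such a $w$ is in place, the restriction $(u+w+h)|_X$ is Lipschitz on $X$, and the arguments from Lemma \ref{L:PositivityIntG} apply with only cosmetic changes.

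\textbf{Construction of $w$.} Since $\mathcal{O}\subset X\subset M$ is a finite set and $D(\mathcal{O})>0$, choose $r_0>0$ smaller than both $D(\mathcal{O})/4$ and the injectivity radius of $M$; with a further reduction depending only on the geometry of $M$, one produces smooth bumps $\chi_x:M\to[0,1]$, $x\in\mathcal{O}$, supported in the geodesic ball $B_{r_0}(x)$, with $\chi_x(x)=1$ and gradient bound $\|D\chi_x\|_0\le 2/r_0$. Since the supports are pairwise disjoint, $\sum_{x\in\mathcal{O}}\chi_x$ takes values in $[0,1]$ and equals $1$ precisely on $\mathcal{O}$, hence
\[
w := r_0\,\varepsilon\,\Bigl(1-\sum_{x\in\mathcal{O}}\chi_x\Bigr)
\]
is in $\mathcal{C}^\infty(M)$, nonnegative, vanishes exactly on $\mathcal{O}$, and satisfies $\|w\|_0\le r_0\varepsilon$ and $\|D_xw\|_0\le 2\varepsilon$. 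Setting $\hat\delta_1':=2r_0$ then yields the sup-norm bound demanded by the proposition, with $\hat\delta_1'$ depending only on $M,u,\psi$ and system constants.

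\textbf{Adapting the positivity argument.} With $w$ fixed, apply \textbf{MCGBP} (Lemma \ref{revael}, Remark \ref{rem-reveal}) to $u|_X,\psi|_X$ to obtain $\bar u\ge 0$ with $Z_{u,\psi}\subset\{\bar u=0\}$, define
\[
a_{\mathcal{O}}:=\frac{\sum_{y\in\mathcal{O}}(\bar u(y)+h(y))}{\sum_{y\in\mathcal{O}}\psi_K(y)},\qquad G:=\bar u+w+h-a_{\mathcal{O}}\psi_K,
\]
and note that $\int G\,d\mu_{\mathcal{O}}=0$ (using $w\equiv 0$ on $\mathcal{O}$). As in Lemma \ref{L:PositivityIntG}, it suffices to show $\int G\,d\mu>0$ for every ergodic $\mu\ne\mu_{\mathcal{O}}$: outside $Area_1:=\{y:w(y)\le|a_{\mathcal{O}}|\|\psi\|_0+\|h\|_0\}$, positivity of $G$ is immediate; for $z\in Area_1$ not generic for $\mu_{\mathcal{O}}$, the Lipschitz bound $w(y)\le 2\varepsilon d(y,\mathcal{O})$ places $z$ inside $B_{r_0}(y_0)$ for a unique $y_0\in\mathcal{O}$, \textbf{ASP} furnishes a first exit time $m_1$ with $D(\mathcal{O})/(2Lip_T)\le d(T^{m_1}z,T^{m_1}y_0)\le D(\mathcal{O})/2$, and the telescoping estimate from Lemma \ref{L:PositivityIntG} proceeds verbatim using $[w]_1\le 2\varepsilon$ and $[h|_X]_1\le\|D_xh\|_0\le 5\varepsilon$ in place of the Hölder seminorms. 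Tracking the constants exactly as in Lemma \ref{L:PositivityIntG} yields admissible $\hat L_1,\hat\delta_1$ (now $\varepsilon$-dependent), and the strict-inequality perturbation at the end delivers uniqueness of $\mu_{\mathcal{O}}$. The principal technical point is enforcing the $\varepsilon$-independence of $\hat\delta_1'$, which is exactly what the plateau height $r_0\varepsilon$ in the construction achieves without violating $\|D_xw\|_0\le 2\varepsilon$.
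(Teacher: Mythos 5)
Your construction of $w$ contains a gap that the rest of the argument cannot absorb. You take $w\in\mathcal C^\infty(M)$ nonnegative and vanishing \emph{exactly} on $\mathcal O$, and then define $Area_1:=\{y: w(y)\le |a_{\mathcal O}|\|\psi\|_0+\|h\|_0\}$, claiming the positivity argument of Lemma \ref{L:PositivityIntG} then "proceeds verbatim". But a smooth nonnegative function vanishing on $\mathcal O$ has vanishing derivative on $\mathcal O$, so $w(y)=O(d(y,\mathcal O)^2)$ near $\mathcal O$; consequently, writing $c:=|a_{\mathcal O}|\|\psi\|_0+\|h\|_0$, the sublevel set $\{w\le c\}$ contains a neighbourhood of $\mathcal O$ of radius of order $\sqrt{c}$, not of order $c/\varepsilon$ as in the proof of Lemma \ref{L:PositivityIntG} (where the perturbation $\varepsilon d^\alpha(\cdot,\mathcal O)$ has a corner and grows linearly off $\mathcal O$). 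Your sentence "the Lipschitz bound $w(y)\le 2\varepsilon d(y,\mathcal O)$ places $z$ inside $B_{r_0}(y_0)$" is the wrong direction: an upper bound on $w$ in terms of $d(\cdot,\mathcal O)$ cannot convert $w(z)\le c$ into a bound on $d(z,\mathcal O)$. This matters quantitatively: in the telescoping step the loss is $\bigl(\|\bar u\|_1+\cdots\bigr)\sum_n d(T^nz,T^ny_0)\lesssim \|\bar u\|_1\cdot\frac{4C}{1-e^{-\lambda}}\cdot(\text{radius of }Area_1)$, and $\bar u$ does \emph{not} vanish on $\mathcal O$ (the orbit is only close to $Z_{u,\psi}$), so this term cannot be dropped. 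With radius $\sim\sqrt{c}$ and gain at the exit time at most $r_0\varepsilon\lesssim \varepsilon D(\mathcal O)$, while $c\lesssim D(\mathcal O)(1/\hat L_1+\hat\delta_1)$, the loss $\sim\sqrt{D(\mathcal O)}$ dominates the gain $\sim\varepsilon D(\mathcal O)$ as $D(\mathcal O)\to 0$; since the orbits produced by Proposition \ref{P:GoodPer} have arbitrarily small gap, no uniform choice of $\hat L_1,\hat\delta_1$ closes the estimate. (This is precisely the obstruction that confines the "smooth bump vanishing on $\mathcal O$" device to the weaker, orbit-dependent local statement of Proposition \ref{prop-4}.)

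The paper's proof sidesteps this entirely: it keeps $\varepsilon d(\cdot,\mathcal O)$ as the working perturbation, uses the Greene--Wu theorem to produce a smooth $w$ that is merely $\varepsilon_1$-close to $\varepsilon d(\cdot,\mathcal O)$ in sup norm with Lipschitz constant $\le\varepsilon+\varepsilon_1$ (such $w$ need not, and generally does not, vanish on $\mathcal O$), and then writes $u+w+h=u+\varepsilon d(\cdot,\mathcal O)+\bigl(w-\varepsilon d(\cdot,\mathcal O)+h\bigr)$, absorbing the small difference $w-\varepsilon d(\cdot,\mathcal O)$ into the $h$-slot of Proposition \ref{P:OpenDencePerMin}, whose hypotheses only require $\|h\|_1\le 10\varepsilon$ and $\|h\|_0<\frac{D(\mathcal O)}{\sharp\mathcal O}\hat\delta$. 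If you want to keep your reduction, you should follow that route rather than re-running the positivity argument with a perturbation that is flat on $\mathcal O$.
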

Here $D_x$ is the derivative of a given function and  $Z_{u,\psi, T}$ is same as the $Z_{u,\psi}$ given by (\ref{E:Z_u,psi}) with respect to system $(X,T)$. The reason of adding subindex "$T$" is to avoid confusion on notions with such invariant set with respect to system $(M,f)$.


\begin{proof}
The proof is based on Proposition \ref{P:OpenDencePerMin} and the following approximation theorem due to Greene and Wu \cite{GW79}.
\begin{thm}\label{thm-G} Let $M$ be a smooth compact manifold. Then $\mathcal{C}^\infty(M)\cap \mathcal{C}^{0,1}(M)$ is Lip-dense in $\mathcal{C}^{0,1}(M)$.
\end{thm}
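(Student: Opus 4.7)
My strategy is to reduce Proposition~\ref{prop-3} to the already-proven Lipschitz case of Proposition~\ref{P:OpenDencePerMin} (the $\a=1$ case) applied to the subsystem $(X,T,\psi|_X)$, and to circumvent the fact that its distinguished perturbation $\ve d(\cdot,\mc O)$ is merely Lipschitz on $M$, not $C^1$. The idea is to replace that bump by a smooth $w\in\mc C^\infty(M)$ which is uniformly close to $\ve d(\cdot,\mc O)$ in Lipschitz norm on $M$, via Greene--Wu (Theorem~\ref{thm-G}). Any observable of the form $u+w+h$ on $M$ then restricts to $u|_X+\ve d(\cdot,\mc O)+\tilde h$ on $X$, where $\tilde h:=(w-\ve d(\cdot,\mc O)+h)|_X$ is to be brought into the perturbation class allowed by Proposition~\ref{P:OpenDencePerMin}.

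\textbf{Executing the reduction.} Since $u\in\mc C^{1,0}(M)$ and $\psi\in\mc C^{0,1}(M)$, the restrictions $u|_X,\psi|_X$ lie in $\mc C^{0,1}(X)$, so $(X,T,\psi|_X)\in\mathfrak C$ and Proposition~\ref{P:OpenDencePerMin} with $\a=1$ produces constants $\hat L,\hat\d>0$ depending on $\ve,u,\psi$ and system constants with the following property: whenever $D(\mc O)>\hat L\cdot d_{1,Z_{u,\psi,T}}(\mc O)$ and $\tilde h\in\mc C^{0,1}(X)$ satisfies $\|\tilde h\|_1<10\ve$ together with $\|\tilde h\|_0<\frac{D(\mc O)}{\sharp\mc O}\hat\d$, the observable $u|_X+\ve d(\cdot,\mc O)+\tilde h$ has unique minimizing measure $\mu_{\mc O}$. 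I set $\hat L_1:=\hat L$. Next, applied to $\phi(x):=\ve d(x,\mc O)$, which satisfies $[\phi]_1\le\ve$ and $\|\phi\|_0\le\ve\cdot\mathrm{diam}(M)$, Theorem~\ref{thm-G} supplies $w\in\mc C^\infty(M)\cap\mc C^{0,1}(M)$ with $\|w-\phi\|_1<\eta$ for a free parameter $\eta>0$; this yields $\|D_xw\|_0\le\ve+\eta$ and $\|w\|_0\le\ve\cdot\mathrm{diam}(M)+\eta$, using that for smooth $w$ on a Riemannian manifold one has $\|D_xw\|_0=[w]_1$.

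\textbf{Parameter tuning.} Writing $(u+w+h)|_X=u|_X+\ve d(\cdot,\mc O)+\tilde h$ with $\tilde h:=(w-\phi+h)|_X$, I estimate $\|\tilde h\|_0\le\eta+\|h\|_0$ and $[\tilde h]_1\le\eta+\|D_xh\|_0\le\eta+5\ve$. I then set
$$\hat\d_1:=\min\{\hat\d/2,\,\ve/\d\},\qquad \eta:=\min\{\ve,\,\tfrac{D(\mc O)}{\sharp\mc O}\hat\d/2\},\qquad \hat\d_1':=\mathrm{diam}(M)+1,$$
where $\d$ is the ASP constant (and $D(\mc O)/\sharp\mc O\le\d$ by \eqref{E:DefDO}). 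A direct check then gives $\|\tilde h\|_0\le\eta+\frac{D(\mc O)}{\sharp\mc O}\hat\d_1<\frac{D(\mc O)}{\sharp\mc O}\hat\d$ and $\|\tilde h\|_1\le 2\eta+\|h\|_0+5\ve\le 2\ve+\ve+5\ve<10\ve$, so Proposition~\ref{P:OpenDencePerMin} gives $\mc M_{min}((u+w+h)|_X;\psi|_X,X,T)=\{\mu_{\mc O}\}$. Moreover $\|D_xw\|_0\le\ve+\eta\le 2\ve$ and $\|w\|_0\le\ve\cdot\mathrm{diam}(M)+\ve=\hat\d_1'\ve$, as required, and $\hat\d_1'$ depends only on $M$, i.e.\ on system constants, independent of $\ve$.

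\textbf{Main obstacle.} The delicate point is simultaneous control: the smoothed bump $w$ must approximate $\phi$ tightly enough in sup norm that $\eta$ can be pushed below the perturbation budget $\frac{D(\mc O)}{\sharp\mc O}\hat\d$---which may be much smaller than $\ve$ when $D(\mc O)/\sharp\mc O$ is small---while its Lipschitz constant stays within $[\ve,2\ve]$, and all of this must be arranged so that $\hat\d_1'$ comes out $\ve$-free. Greene--Wu density in the Lipschitz norm is precisely what permits joint sup and Lipschitz control of the approximation error, and the linear scaling $\|\phi\|_0\le\ve\cdot\mathrm{diam}(M)$ is what forces $\hat\d_1'$ to depend only on $M$.
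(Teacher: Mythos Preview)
Your proposal does not address the stated theorem at all. Theorem~\ref{thm-G} is the Greene--Wu approximation result: that $\mathcal C^\infty(M)\cap\mathcal C^{0,1}(M)$ is Lip-dense in $\mathcal C^{0,1}(M)$. What you have written is instead a proof of Proposition~\ref{prop-3}, in which Theorem~\ref{thm-G} is \emph{invoked as a black box} to smooth the Lipschitz bump $\varepsilon d(\cdot,\mathcal O)$. In other words, you have used the statement you were asked to prove rather than proving it.

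For the record, the paper does not prove Theorem~\ref{thm-G} either: it is quoted from \cite{GW79} and used exactly as you use it, as an input to the proof of Proposition~\ref{prop-3}. So there is no ``paper's own proof'' of Theorem~\ref{thm-G} to compare against; a genuine proof would require the convolution/averaging construction on Riemannian manifolds developed by Greene and Wu. If your intention was actually to prove Proposition~\ref{prop-3}, then your argument is essentially the same as the paper's (same reduction to Proposition~\ref{P:OpenDencePerMin} with $\alpha=1$, same application of Theorem~\ref{thm-G}, same parameter bookkeeping), but that is a different target than the one stated.
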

In this Theorem, {\it $\mathcal{C}^\infty(M)\cap \mathcal{C}^{0,1}(M)$ is Lip-dense in $\mathcal{C}^{0,1}(M)$} means that for any $g_1\in \mathcal{C}^{0,1}(M)$ and $\varepsilon>0$ there is a $g_2\in \mathcal{C}^\infty$ such that $\|g_1-g_2\|_0<\varepsilon$ and $\|g_2\|_1<\varepsilon+\|g_1\|_1$. Especially, $\|D_x g_2\|_0< \varepsilon+\|g_1\|_1$.

Fix $\varepsilon,\mathcal{O},\psi,u$ as in the Proposition \ref{prop-3}. Note that Proposition \ref{P:OpenDencePerMin} in the case of $\a=1$ is applicable for the current setting. Thus, by taking $\hat L_1=\hat L$ and $\mc O$ satisfying (\ref{E:GapCond}) for $\a=1$,  one has that for any $g\in \mc C^{0,1}(M)$ satisfying that $\|g\|_1\le 10\ve$ and $\|g\|_0< \frac{D(\mc O)}{\sharp \mc O}\hat \d$
$$\left\{\mu_{\mathcal{O}}:=\frac{1}{\sharp\mathcal{O}}\sum_{x\in\mathcal{O}}\delta_{x}\right\}=\mathcal{M}_{min}\left( (u+\ve d(\cdot, \mc O)+g)|_{X};\psi|_X,{X},T\right),$$
where $\hat L$, $\hat \d$ and $\mc O$ are as in Proposition \ref{P:OpenDencePerMin}. Denote that
$$\mc U_{\mc C^{0,1}}(\ve d(\cdot,\mc O)):=\left\{u+\ve d(\cdot, \mc O)+g\Big|\ g\in \mc C^{0,1}(M),\ \|g\|_1\le 10\ve,\ \|g\|_0< \frac{D(\mc O)}{\sharp \mc O}\hat \d\right\}.$$

Note that the only obstacle prevent one to derive Proposition \ref{prop-3} from Proposition \ref{P:OpenDencePerMin} directly is that $d(\cdot,\mc O)$ is only Lipschitz rather than $\mc C^1$.  A nature idea to overcome this is to find a $w\in \mc C^{1,0} (M)$ close to $\ve d(\cdot,\mc O)$ in $\mc C^{0,1}(M)$ such that an open neighborhood $\mc U_{\mc C^{1,0}}(w)$ of $u+w$ in $\mc C^{1,0}(M)$ is a subset of $\mc U_{\mc C^{0,1}}(\ve d(\cdot,\mc O))$, which is doable by applying Theorem \ref{thm-G}.

Precisely, for any $\ve_1>0$, by Theorem \ref{thm-G}, there exists a function $w\in \mc C^\infty(M)$ such that

	$$\| w\|_1\le \|\varepsilon d(\cdot,\mathcal{O})\|_1+\varepsilon_1$$
	and
	$$\| w-\varepsilon d(\cdot,\mathcal{O})\|_0<\ve_1.$$
Therefore,
\begin{align}\label{C1-1}
\|D_xw\|_0\le \varepsilon+\ve_1\text{ and }\|w\|_0\le \|\varepsilon d(\cdot,\mathcal{O})\|_0+\ve_1.
\end{align}

	Next, we choose proper $\ve_1$, $\hat \d_1$ and $\hat \d_1'$ to meet the requirement of the proposition as follows. For $h\in\mathcal{C}^{1,0}(M)$ we rewrite $u+w+h$ as $u+\varepsilon d(\cdot,\mathcal{O})+(w-\varepsilon d(\cdot,\mathcal{O})+h)$
	It remains to make $w-\varepsilon d(\cdot,\mathcal{O})+h$ satisfying the conditions of $h$ as in Proposition \ref{P:OpenDencePerMin} by adjusting $\ve_1$.
Note that
\begin{equation}\label{C1-2}
\|w-\varepsilon d(\cdot,\mathcal{O})+h\|_1\le\|w\|_1+\|\varepsilon d(\cdot,\mathcal{O})\|_1+\|h\|_1\le 2\varepsilon+\ve_1+\|h\|_1,
\end{equation}
	and
	\begin{equation}\label{C1-3}
	\|w-\varepsilon d(\cdot,\mathcal{O})+h\|_0\le\|w-\varepsilon d(\cdot,\mathcal{O})\|_0+\|h\|_0< \ve_1+\|h\|_0.
	\end{equation}
	Take
	$$\ve_1=\min\left\{\ve,\frac{D(\mc O)}{2\sharp \mc O}\cdot \hat \d\right\},\ \hat \d_1'=diam(M)+1,\ \hat\d_1=\frac12\hat\d,$$
	and let $\|h\|_1< 5\ve$ together with $\|h\|_0< \frac{D(\mc O)}{\sharp \mc O}\cdot \hat \d_1$.
	Then one has that
	\begin{align*}
	&\|D_xw\|_0\le 2\ve\text{ and }\|w\|_0\le \hat \d_1'\ve&\text{by }(\ref{C1-1})\\
	&\|w-\varepsilon d(\cdot,\mathcal{O})+h\|_1\le 8\ve<10\ve&\text{by }(\ref{C1-2})\\
	&\|w-\varepsilon d(\cdot,\mathcal{O})+h\|_0< \frac{D(\mc O)}{\sharp \mc O}\cdot \hat \d&\text{by }(\ref{C1-3}).
	\end{align*}
	Denote that
	$$\mc U_{\mc C^{1,0}}(w):=\left\{u+w+h\Big|\ h\in \mc C^{1,0}(M),\ \|h\|_1< 5\ve, \|h\|_0<  \frac{D(\mc O)}{\sharp \mc O}\cdot \hat \d_1\right\}.$$
	Thus, $\mc U_{\mc C^{1,0}}(w)\subset \mc U_{\mc C^{0,1}}(\ve d(\cdot,\mc O)) $, which is simultaneously a non-empty open subset in $\mc C^{1,0}(M)$. This complete the proof.
\end{proof}

\section{Discussions on the case of $\mathcal{C}^{s,\alpha}$ observables}\label{S:CsCase}
In this section, we consider the case when the observable functions has higher regularity. Unlike the case of  $\mc C^{0,\a}$ and $\mc C^{1,0}$ observables, only partial results are presented in this paper. To avoid unnecessarily tedious discussions, we will consider the following model which is relatively simple and illustrative.

Let $(M,f)$ be a dynamical system on a smooth compact manifold $M$ and $\psi:M\rightarrow \mathbb{R}^+$ be a strictly positive continuous function. Denote that $Per_{s,\alpha}({M},\psi,f)$ is the collection of function $u\in\mathcal{C}^{s,\alpha}(M)$ such that $\mathcal{M}_{min}(u;\psi,{M},f)$ contains only one probability measure which is periodic. Now we define $Per^*_{s,\alpha}({M},\psi,f)$ the collection of function  $u\in\mathcal{C}^{s,\alpha}(M)$ such that $\mathcal{M}_{min}(u;\psi,{M},f)$ contains at least one periodic probability measure. And $Loc_{s,\alpha}({M},\psi,f)$ is defined by
\begin{align*}Loc_{s,\alpha}({M},\psi,f)=\{&u\in Per_{s,\alpha}({M},\psi,f):\text{ there is } \varepsilon>0\text{ such that }\\
&\mathcal{M}_{min}(u+h;\psi,{M},f)=\mathcal{M}_{min}(u;\psi,{M},f)\text{ for all }\|h\|_{s,\alpha}<\varepsilon \}.\end{align*}
In the case $s\ge1$ and $\alpha>0$ or $s\ge 2$, we do not have result like Theorem \ref{T:MainResult}. But, we have the following weak version.
\begin{prop}\label{prop-4}
	Let $f:M\rightarrow M$ be a Lipschitz continuous selfmap on a smooth compact manifold $M$ and  $(M,f)$ has {\bf ASP} and {\bf MCGBP}. Let  $\psi\in \mathcal{C}^{0,1}(M)$ be strictly positive.  If $u\in\mathcal{C}(M)$ with $u\ge 0$ and there is periodic orbit $\mathcal{O}$ of $(M,f)$ such that $u|_\mathcal{O}=0,$
	then for all $\varepsilon>0$, $s\in\mathbb{N}$ and $0\le \alpha\le 1$, there is a function $w\in\mathcal{C}^\infty(M)$ with $\|w\|_{s,\alpha}<\varepsilon$ and a constant $\varrho>0$ such that the probability measure
	$$\mu_{\mathcal{O}}=\frac{1}{\sharp\mathcal{O}}\sum_{x\in\mathcal{O}}\delta_{x}\in\mathcal{M}_{min}(u+w+h;\psi,{M},f),$$
	whenever $h\in\mathcal{C}^{0,1}({M})$ with $\|h\|_1<\varrho$ and
	$\|h\|_0<\varrho.$
\end{prop}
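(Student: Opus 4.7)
The plan is to adapt the scheme of Proposition~\ref{P:OpenDencePerMin} with two key simplifications: since $u\ge 0$ and $u|_\mathcal{O}=0$ already furnish a nonnegative observable vanishing on $\mathcal{O}$, the Ma\~n\'e--Conze--Guivarc'h--Bousch representative of Lemma~\ref{revael} is unnecessary; and since we only need $\mu_\mathcal{O}$ to belong to (rather than equal) $\mathcal{M}_{min}(u+w+h;\psi,M,f)$, a one-sided estimate will suffice. Setting $A:=\int h\,d\mu_\mathcal{O}/\int\psi\,d\mu_\mathcal{O}$ (so $|A|\le\|h\|_0/\psi_{min}$) and $G:=u+w+h-A\psi$, we have $\int G\,d\mu_\mathcal{O}=0$, so it is enough to prove $\int G\,d\mu\ge 0$ for every $\mu\in\mathcal{M}^e(M,f)$.

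The perturbation $w$ is built by a standard bump-function recipe. Using normal coordinates around each $x_i\in\mathcal{O}$ and a $\mathcal{C}^\infty$ cutoff vanishing to arbitrarily high order at $0$, one produces a fixed $\phi\in\mathcal{C}^\infty(M)$ with $\phi\ge 0$, $\phi|_\mathcal{O}=0$, and $\phi\ge 1$ outside a small $\delta_0$-neighborhood $U$ of $\mathcal{O}$. Setting $w:=\varepsilon_1\phi$ with $\varepsilon_1=\varepsilon/(1+\|\phi\|_{s,\alpha})$ gives $\|w\|_{s,\alpha}<\varepsilon$ and $w\ge\varepsilon_1$ off $U$. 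For $x\notin U$, the nonnegativity of $u$ immediately yields
\[
G(x)\ge \varepsilon_1-\bigl(1+\|\psi\|_0/\psi_{min}\bigr)\|h\|_0>\varepsilon_1/2,
\]
provided $\varrho$ is chosen below a fixed constant.

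The remaining case — in which a generic point $z$ of some ergodic $\mu\ne\mu_\mathcal{O}$ lies in $U$ — is handled by a direct shadowing argument. By {\bf ASP}(1), if the orbit of $z$ $\delta$-shadowed the orbit of some $y\in\mathcal{O}$ forever, then $d(f^n z,f^n y)\to 0$ and $z$ would be generic for $\mu_\mathcal{O}$, a contradiction. Hence there is a smallest $m_1\ge 1$ such that $f^{m_1}z$ escapes $U$. Picking $y\in\mathcal{O}$ with $d(z,y)=d(z,\mathcal{O})$, {\bf ASP}(1) yields exponentially decaying shadowing of $f^n y$ by $f^n z$ on $[0,m_1-1]$; using $u(f^n z)-u(f^n y)=u(f^n z)\ge 0$ (which may be discarded favorably) and the Lipschitz norms of $w$, $h$, $\psi$, one gets
\[
\sum_{n=0}^{m_1-1}\bigl(G(f^n z)-G(f^n y)\bigr)\ge -C_1\delta_0
\]
for a constant $C_1$ depending on $\|w\|_1,\|h\|_1,\|\psi\|_1$ and the {\bf ASP} constants. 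Simultaneously, $\sum_{n=0}^{m_1-1}G(f^n y)=\sum_{n=0}^{m_1-1}(h-A\psi)(f^n y)$ has absolute value at most $\sharp\mathcal{O}\cdot(1+\|\psi\|_0/\psi_{min})\|h\|_0$, since the full-period sums vanish by the choice of $A$ and only one period's residual survives. Adding $G(f^{m_1}z)\ge\varepsilon_1/2$ from the easy region gives $\sum_{n=0}^{m_1}G(f^n z)>0$ once $\varrho$ is small enough. Iterating this along the orbit of $z$, exactly as in the proof of Proposition~\ref{P:OpenDencePerMin}, and passing to the Birkhoff limit yields $\int G\,d\mu\ge 0$.

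The main obstacle is the constant bookkeeping in the shadowing step: one must arrange $\varrho$ so that the single positive contribution $\varepsilon_1/2$ from the excursion time $m_1$ dominates both the shadowing residual (which scales with $\delta_0$ times Lipschitz norms, and can be tamed by choosing $\delta_0$ small when designing $\phi$) and the periodic residual from $y$'s orbit (which scales with $\sharp\mathcal{O}\|h\|_0$). The seeming difficulty that $u$ is only continuous evaporates because every pointwise difference $u(f^n z)-u(f^n y)=u(f^n z)\ge 0$ contributes with the favorable sign and can simply be dropped; this is precisely what allows us to weaken the regularity hypothesis on $u$ beyond the Lipschitz/H\"older setting of Proposition~\ref{P:OpenDencePerMin}.
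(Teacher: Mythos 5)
Your proposal is correct and follows essentially the same route as the paper's proof: a smooth nonnegative $w$ vanishing exactly on $\mathcal{O}$ and bounded below off a small neighborhood, the normalized observable $G$ with $\int G\,d\mu_{\mathcal O}=0$, the easy estimate off the neighborhood, the ASP escape-time/shadowing estimate with the one-period residual $\sharp\mathcal{O}\,\|h\|_0$, and the Birkhoff-sum iteration (your pointwise discarding of $u\ge 0$ is the same observation the paper makes once at the integral level). The only detail to make explicit is that $\delta_0$ must be taken smaller than $\min\{\delta, D(\mathcal{O})/(2\,Lip_f)\}$ so that, by induction, $f^n y$ remains the nearest point of $\mathcal{O}$ to $f^n z$ for $0\le n<m_1$ and {\bf ASP}(1) is actually applicable on that block — this is precisely the role of the paper's constant $\rho<D(\mathcal{O})/(2\,Lip_f)$.
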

By using proposition \ref{prop-4}, we have the following result immediately.
\begin{thm} Let $f:M\rightarrow M$ be a Lipschitz continuous selfmap on a smooth compact manifold $M$. If $(M,f)$ has {\bf ASP} and {\bf MCGBP}, then $Loc_{s,\alpha}({M},\psi)$ is an open dense subset of $Per^*_{s,\alpha}({M},\psi)$ w.r.t. $\|\cdot\|_{s,\alpha}$ for integer $s\ge 1$, real number $0\le \alpha\le 1$ and $\psi\in \mathcal{C}^{0,1}(M)$ is a strictly positive continuous function.
\end{thm}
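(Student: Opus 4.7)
My plan is to treat openness and density of $Loc_{s,\alpha}(M,\psi)$ inside $Per^*_{s,\alpha}(M,\psi)$ separately, with Proposition~\ref{prop-4} acting as the main engine for density. Openness is almost tautological from the definition of $Loc_{s,\alpha}$: if $u\in Loc_{s,\alpha}$ has $\varepsilon_0$-witness, meaning $\mathcal{M}_{min}(u+h;\psi,M,f)=\mathcal{M}_{min}(u;\psi,M,f)$ for every $\|h\|_{s,\alpha}<\varepsilon_0$, then any $u'$ with $\|u'-u\|_{s,\alpha}<\varepsilon_0/2$ belongs to $Loc_{s,\alpha}$ with witness $\varepsilon_0/2$, since $\mathcal{M}_{min}(u'+h';\psi,M,f)=\mathcal{M}_{min}(u+(u'-u+h');\psi,M,f)=\mathcal{M}_{min}(u;\psi,M,f)=\mathcal{M}_{min}(u';\psi,M,f)$ whenever $\|h'\|_{s,\alpha}<\varepsilon_0/2$. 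Hence $Loc_{s,\alpha}$ is open in $\mathcal{C}^{s,\alpha}(M)$, and is therefore relatively open in $Per^*_{s,\alpha}$.

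For density, fix $u\in Per^*_{s,\alpha}$ and $\varepsilon>0$; pick any periodic orbit $\mathcal{O}$ with $\mu_\mathcal{O}\in\mathcal{M}_{min}(u;\psi,M,f)$. Since $s\ge 1$ one has $u\in\mathcal{C}^{0,1}(M)$, and Lemma~\ref{revael} (applied with $\alpha=1$) produces $K\in\mathbb{N}$ and $v\in\mathcal{C}^{0,1}(M)$ such that
\begin{equation*}
\bar u := u_K - v\circ f^K + v - \beta(u;\psi,M,f)\,\psi_K \ge 0,\qquad \bar u|_{\mathcal{O}}=0,
\end{equation*}
using $\mathcal{O}\subset Z_{u,\psi,f}$. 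Since $\bar u\ge 0$ and $\bar u$ vanishes on $\mathcal{O}$, Proposition~\ref{prop-4} applies to $\bar u$ and yields $w\in\mathcal{C}^\infty(M)$ with $\|w\|_{s,\alpha}<\varepsilon$ and a constant $\varrho>0$ such that $\mu_\mathcal{O}$ is the unique minimizing measure of $\bar u+w+h$ for every $h\in\mathcal{C}^{0,1}(M)$ with $\|h\|_1,\|h\|_0<\varrho$.

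To transfer this conclusion back to $u$, observe that by invariance $\int u_K\,d\nu=\int u\,d\nu$, $\int\psi_K\,d\nu=\int\psi\,d\nu$ and $\int v\circ f^K\,d\nu=\int v\,d\nu$ for every $\nu\in\mathcal{M}(M,f)$, hence
\begin{equation*}
\frac{\int(\bar u+w+h)\,d\nu}{\int\psi\,d\nu}=\frac{\int(u+w+h)\,d\nu}{\int\psi\,d\nu}-\beta(u;\psi,M,f).
\end{equation*}
The two ratios differ by a constant, whence $\mathcal{M}_{min}(\bar u+w+h;\psi,M,f)=\mathcal{M}_{min}(u+w+h;\psi,M,f)$. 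Setting $\tilde u:=u+w$, one obtains $\mathcal{M}_{min}(\tilde u+h;\psi,M,f)=\{\mu_\mathcal{O}\}$ for every $h\in\mathcal{C}^{0,1}(M)$ with $\|h\|_1,\|h\|_0<\varrho$. Since the embedding $\mathcal{C}^{s,\alpha}(M)\hookrightarrow\mathcal{C}^{0,1}(M)$ is continuous for $s\ge 1$, the same holds for every $h\in\mathcal{C}^{s,\alpha}(M)$ with $\|h\|_{s,\alpha}$ sufficiently small, proving $\tilde u\in Loc_{s,\alpha}$ and $\|\tilde u-u\|_{s,\alpha}<\varepsilon$.

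The technically delicate point will be invoking the uniqueness of $\mu_\mathcal{O}$ as minimizer in Proposition~\ref{prop-4}: the $Loc$-property requires the full identity $\mathcal{M}_{min}=\{\mu_\mathcal{O}\}$ on an entire $\mathcal{C}^{0,1}$-neighborhood, rather than merely the membership $\mu_\mathcal{O}\in\mathcal{M}_{min}$ as literally displayed. One should secure uniqueness by choosing $w$ to be a $\mathcal{C}^{s,\alpha}$-small $\mathcal{C}^\infty$-approximation (in the $\mathcal{C}^{0,1}$-norm) of a bump function that vanishes exactly on $\mathcal{O}$ and is strictly positive elsewhere, then running the ``$Area_1$'' argument from the proof of Proposition~\ref{P:OpenDencePerMin}: {\bf ASP} forces any orbit that starts near $\mathcal{O}$ but does not converge into it to exit a fixed small ball around $\mathcal{O}$, and on the complement of that ball the bump component of $w$ contributes a strictly positive ratio gap that dominates any sufficiently $\mathcal{C}^{0,1}$-small perturbation $h$.
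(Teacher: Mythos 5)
Your proposal is correct and follows the same route as the paper: the paper's entire proof of this theorem is the single line ``Immediately from Remark \ref{rem-reveal} and Proposition \ref{prop-4}'', i.e.\ exactly your chain of applying Lemma \ref{revael} to replace $u$ by $\bar u\ge 0$ vanishing on $\mathcal O$ (which lies in $Z_{u,\psi}$ because $\mu_{\mathcal O}$ is minimizing), invoking Proposition \ref{prop-4}, and transferring back via the coboundary identity. Your openness argument and the $\mathcal C^{s,\alpha}\hookrightarrow\mathcal C^{0,1}$ embedding step are fine.

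On the ``technically delicate point'' you flag: you are right that Proposition \ref{prop-4} as stated only gives membership $\mu_{\mathcal O}\in\mathcal M_{min}$, while $Loc_{s,\alpha}$ demands the identity $\mathcal M_{min}=\{\mu_{\mathcal O}\}$ on a whole neighborhood; the paper silently elides this. Your proposed fix (re-running the $Area_1$ argument with a strict bump) works, but there is a cheaper route that uses only the statement of Proposition \ref{prop-4}: if some ergodic $\nu\neq\mu_{\mathcal O}$ also minimized $u+w+h$ with $\|h\|_1,\|h\|_0$ strictly below $\varrho$, then $\nu$ is not supported on $\mathcal O$ (the only invariant measure on a single periodic orbit is $\mu_{\mathcal O}$), so $\int d(\cdot,\mathcal O)\,d\nu>0$; taking $h'=h-t\,d(\cdot,\mathcal O)$ with $t>0$ small keeps $\|h'\|_1,\|h'\|_0<\varrho$, leaves the ratio at $\mu_{\mathcal O}$ unchanged, and strictly decreases it at $\nu$, contradicting $\mu_{\mathcal O}\in\mathcal M_{min}(u+w+h';\psi,M,f)$. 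This yields uniqueness, and its persistence, directly from the ``for all $h$'' quantifier in Proposition \ref{prop-4}, so your argument closes without any extra dynamical input.
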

\begin{proof}Immediately from Remark \ref{rem-reveal} and Proposition \ref{prop-4} .
	\end{proof}
Without result like Proposition \ref{P:OpenDencePerMin}, we can not get the full result about the generality of $\mathcal{C}^{s,\alpha}(M), s\ge 2$ or $s\ge 1$ and $\alpha >0$. So we rise the following question:
\begin{ques}By a expanding map $f:\mathbb{T}\to\mathbb{T}:x\to 2x$, is there a  $u\in\mathcal{C}^{s,\alpha}(\mathbb{T})$, $s\ge1$ and $0<\alpha\le 1$ or $s\ge 2$ such that any function near $u$ w.r.t. $\|\cdot\|_{s,\alpha}$ has no periodic minimizing measure?
\end{ques}	
At last, we complete the proof of proposition \ref{prop-4}.
\begin{proof}[Proof of proposition \ref{prop-4}] Fix $\varepsilon, s,\alpha,\mathcal{O},\psi$ as in proposition. $C$ and $\delta$ are the constants as in {\bf ASP} and $Lip_f$ is definited as in \eqref{lip-definition}.	
	Just take $w\in\mathcal{C}^\infty$ with $\|w\|_{s,\alpha}<\varepsilon$, $w|_\mathcal{O}=0$ and $w|_{M\setminus\mathcal{O}}>0$. For $0\le r\le D(\mathcal{O})$, we note 
 $$\theta(r)=\min\{w(x):d(x,\mathcal{O})\ge  r, x\in{M}\}.$$
It is clear that  $\theta(0)=0$, $\theta(r)>0$ for $ r\neq 0$ and $\theta$ is non-decreasing. Now we fix the constants
	$$0<\rho<\frac{D(\mathcal{O})}{2Lip_f}$$
	and positive $\varrho$ smaller than
$$
\begin{small}
\min\left\{\frac{\theta(\rho)\psi_{min}}{\psi_{min}+\|\psi\|_0},\theta\left(\frac{D(\mathcal{O})}{2Lip_f}\right)
\cdot\frac{1-e^{-\lambda}}{4C\rho},\frac{1}{2}\cdot\frac{\theta\left(\frac{D(\mathcal{O})}{2Lip_f}\right)}{\frac{2C\rho\|\psi\|_1}{(1-e^{-\lambda})\psi_{min}}+\sharp\mathcal{O}
+\sharp\mathcal{O}\frac{\|\psi\|_0}{\psi_{min}}+\frac{\|\psi\|_0}{\psi_{min}}+1}\right\}.
\end{small}$$
 By fixing $h\in \mathcal{C}^{0,1}({M})$ with $\|h\|_1<\varrho$ and $\|h\|_0<\varrho$, we are to show that $\mu_{\mathcal{O}}\in\mathcal{M}_{min}(w+h;\psi,{M},f)$ which implies that $\mu_{\mathcal{O}}\in\mathcal{M}_{min}( u+w+h;\psi,{M},f)$ since $u\ge 0$ and $u|_\mathcal{O}=0$ by assumption.\\
	Note that $G=w+h-a_\mathcal{O}\psi$,  where $a_{\mathcal{O}}:=\frac{\sum_{y\in\mc O}(w+h)(y)}{\sum_{y\in\mc O}\psi(y)}$. It is straightforward to see that
	\begin{align}\label{b-1}
	\begin{split}
	|a_{\mathcal{O}}|&\le\frac{\|h\|_0}{\psi_{min}},
	\end{split}
	\end{align}
	where we used $w|_\mathcal{O}=0$. Then $\frac{\int Gd\mu}{\int \psi d\mu}=\frac{\int w+hd\mu}{\int \psi d\mu}-a_\mathcal{O}.$
	Therefore, to show that $\mu_{\mathcal{O}}\in\mathcal{M}_{min}(w+h;\psi,{M},f)$, it is enough to show that
	$$\int Gd\mu\ge0\text{ for all }\mu\in\mathcal{M}^e({M},f),$$
	where we used the assumption $\psi$ is strictly positive and the fact $\int Gd\mu_\mathcal{O}=0$.\\
	{\it {\bf Claim 1.} Put $ Area_1=\{y\in{M}:d(y,\mathcal{O})\le \rho\}$, then $Area_1$ contains all $x\in {M}$ with $G(x)\le0$. }
	\begin{proof}[Proof of claim 1.]
		For $x\notin Area_1$, we have
		\begin{align*}
		G(x)&= (w+h-a_\mathcal{O}\psi)(x)\ge \theta(\rho)-|a_\mathcal{O}|\|\psi\|_0-\|h\|_0\\
		&\ge \theta(\rho)-\frac{\|\psi\|_0+\psi_{min}}{\psi_{min}}\|h\|_0\\
		&>\theta(\rho)-\frac{\|\psi\|_0+\psi_{min}}{\psi_{min}}\varrho\\
		&\ge0.	
		\end{align*}
	This ends the proof of claim 1.\end{proof}		
	{\it {\bf Claim 2.} If $z\in {M}$ is not a generic point of $\mu_\mathcal{O}$, then there is $m\in\mathbb{N}\cup\{0\}$ such that $\sum_{i=0}^mG(f^iz)>0$.}
	\begin{proof}[Proof of claim 2.] 	If $z\notin Area_1$, just note $m=0$,
		we have nothing to prove.
		
		Now we assume that $z\in Area_1$. There is $y_0\in\mathcal{O}$ such that $$d(z,y_0)=d(z,\mathcal{O})\le\rho <\frac{D(\mathcal{O})}{2Lip_f}<\delta.$$
		If  $d(f^kz,f^ky_0)\le \delta$ for  all $k\ge 0$,  by {\bf ASP},  we have
		$$d(f^kz,f^ky_0)\le Ce^{-\lambda k}(d(z,y_0)+d(f^{2k}z,f^{2k}y_0))\le 2Ce^{-\lambda k}\delta\to0\text { as }k\to+\infty.$$ Hence, $z$ is a generic point of $\mu_\mathcal{O}$ which is impossible by our assumption.
	Therefore, there must be some $m_1>0$ such that $d(f^{m_1} z,f^{m_1}y_0)\ge\delta.$
		There exists $m_2>0$ the smallest time such that
		\begin{align}\label{b-2}
		\frac{D(\mathcal{O})}{2Lip_f}\le d(f^{m_2} z,f^{m_2}y_0)\le\frac{D(\mathcal{O})}{2},
		\end{align}	
		where we used the assumption $f$ is Lipschitz with Lipschitz constant $Lip_f$.
		Then we have $d(f^{m_2} z,\mathcal{O})=d(f^{m_2} z,f^{m_2}y_0)\ge 	\frac{D(\mathcal{O})}{2Lip_f}$ and
		\begin{align}\label{b-3}
		\begin{split}G(f^{m_2} z)=(w+h-a_\mathcal{O}\psi)(f^{m_2} z)&\ge \theta\left(\frac{D(\mathcal{O})}{2Lip_f}\right)-\|h\|_0- |a_\mathcal{O}|\|\psi\|_0.\\
		\end{split}
		\end{align}
		where we used the definition of $\theta(\cdot)$.
		On the other hand,
		$\frac{D(\mathcal{O})}{2Lip_f}>\rho$ by assumption which implies that
		\begin{align}\label{b-4}
		f^{m_2}z\notin Area_1.
		\end{align}
		We take  $m_3$ the largest time with $0\le m_3\le m_2$ such that
		$$f^{m_3}z\in Area_1,$$
		where we use the assumption $z\in Area_1$.
		By \eqref{b-4}, it is clear that $m_3<m_2$ since $m_2$ is the smallest time meets \eqref{b-2}.
		Then by claim 1, \begin{align}\label{b-5}
		G(f^{n}z)>0\text{  for all }m_3<n<m_2.\end{align}
		Additionally, by the choice of $m_2$ and \eqref{b-2} one has that
		$$d(f^nz,f^ny_0)\le \frac{D(\mathcal{O})}{2}\le \delta\text{ for all } 0\le n\le m_3.$$ Therefore, by {\bf ASP}, we have for all $0\le n\le m_3$,
		\begin{align*}
		d(f^nz,f^ny_0)\le C\rho(e^{-\lambda n}+e^{-\lambda(m_3-n)}),
		\end{align*}
		where we used $z,f^{m_3}z\in Area_1$.
		Hence,
		\begin{align*}
	\sum_{n=0}^{m_3}d(f^nz,f^ny_0)\le \frac{2C\rho}{1-e^{-\lambda}}.
		\end{align*}
		Since $w\ge 0$ and $w|_\mathcal{O}=0$, one has
		\begin{align}\label{b-6}
		\begin{split}&	\sum_{n=0}^{m_3}\left(G(f^nz)-G(f^ny_0)\right)\\
		=&\sum_{n=0}^{m_3}\left(w(f^nz)-w(f^ny_0)+h(f^nz)-h(f^ny_0)+a_\mathcal{O}\psi(f^ny_0)-a_\mathcal{O}\psi(f^nz)\right)\\
		\ge& -(\|h\|_1+|a_\mathcal{O}|\|\psi\|_1)\sum_{n=0}^{m_3}d(f^nz,f^ny_0)\\
		\ge& - (\|h\|_1+|a_\mathcal{O}|\|\psi\|_1) \cdot \frac{2C\rho}{1-e^{-\lambda}}.
		\end{split}
		\end{align}
		By assuming that $m_3=p\sharp\mathcal{O}+q$ for some nonnegative integer $p$ and $0\le q\le \sharp \mathcal{O}-1$, one has
		\begin{align}\label{b-7}
		\begin{split}
		\sum_{n=0}^{m_3}G(f^ny_0)&=\sum_{m_3-q-1}^{m_3}G(f^ny_0)\ge -\sharp\mathcal{O}\cdot (\|h\|_0+|a_\mathcal{O}|\|\psi\|_0).\\
		\end{split}
		\end{align}
		where we used the facts $\int Gd\mu_\mathcal{O}=0$ and $G\ge -\|h\|_0+|a_\mathcal{O}|\|\psi\|_0$.
		Combining \eqref{b-1},\eqref{b-3}, \eqref{b-5}, \eqref{b-6} and \eqref{b-7}, we have
		\begin{align*}
		\sum_{n=0}^{m_2}G(f^nz)\ge& \sum_{n=0}^{m_3}G(f^nz)+G(f^{m_2}z)\\
		=& \sum_{n=0}^{m_3}\left(G(f^nz)-G(f^ny_0)\right)+\sum_{n=0}^{m_3}G(f^ny_0)+G(f^{m_2}z)\\
		\ge& - (\|h\|_1+|a_\mathcal{O}|\|\psi\|_1) \cdot \frac{2C\rho}{1-e^{-\lambda}}-\sharp\mathcal{O}\cdot (\|h\|_0+|a_\mathcal{O}|\|\psi\|_0)\\
		&+\theta\left(\frac{D(\mathcal{O})}{2C}\right)-\|h\|_0- |a_\mathcal{O}|\|\psi\|_0\\
		=&\theta\left(\frac{D(\mathcal{O})}{2C}\right)-\|h\|_1  \cdot \frac{2C\rho}{1-e^{-\lambda}}\\
		&-\left( \frac{2C\rho\|\psi\|_1}{(1-e^{-\lambda})\psi_{min}}+\sharp\mathcal{O}+\sharp\mathcal{O}\frac{\|\psi\|_0}{\psi_{min}}+\frac{\|\psi\|_0}{\psi_{min}}+1\right)\|h\|_0\\
		>&0,
		\end{align*}
		where we used the assumption of $h$. Therefore, $m=m_2$ is the time we need. This ends the  proof of claim 2.
	\end{proof}
	
	Now we end the proof. It is enough to show that for all $\mu\in\mathcal{M}^e({M},f)$
	$$\int Gd\mu\ge0.$$
	Given $\mu\in\mathcal{M}^e(f)$, in the case $\mu=\mu_{\mathcal{O}}$, it is obviously true.  In the case $\mu\neq\mu_{\mathcal{O}}$, just let $z$ be a generic point of $\mu$. Note that  $z$ is not a generic point of $\mu_{\mathcal{O}}$. By claim 2, we have $m_1\in\mathbb{N}$ such that $$\sum_{n=0}^{m_1}G(f^nz)>0$$
	Note that  $f^{m_1+1}z$ is also not a generic point of $\mu_{\mathcal{O}}$. By claim 2, we have $m_2\ge m_1+1$ such that $$\sum_{n=m_1+1}^{m_2}G(f^nz)>0.$$
	By repeating the above process, we have $0\le m_1<m_2<m_3<\cdots$ such that
	$$\sum_{n=m_i+1}^{m_{i+1}}G(f^nz)>0,i=0,1,2,3,\cdots,$$
	where $m_0$ is noted by $-1$. Therefore
\begin{align*}\int Gd\mu&=\lim_{i\to+\infty}\frac{1}{m_i+1}\sum_{n=0}^{m_i}G(f^nz)\\
&= \lim_{i\to+\infty}\frac{1}{m_i+1}\left(\sum_{n=0}^{m_1}G(f^nz)+\sum_{n=m_1+1}^{m_2}G(f^nz)+\cdots+\sum_{n=m_{i-1}}^{m_i}G(f^nz)\right)\\
&\ge 0.
\end{align*}
	That is, we have $\mu_{\mathcal{O}}\in\mathcal{M}_{min}(u+h;\psi,{M},f)$ by our beginning discussion. This ends the proof.\end{proof}

\appendix

\section{{Ma\~n\'e-Conze-Guivarc'h-Bousch's Property}}\label{Sec-manelemma}

In this section, we mainly present Bousch's work (see \cite{Bousch_Mane} for detail) to show that uniformly hyperbolic  diffeomorphism on a smooth compact manifold has {\bf MCGBP}. The same argument shows that the Axiom A attractor  also has {\bf MCGBP}.


Let $f:M\rightarrow M$ be a diffeomorphism on a smooth compact manifold $M$.  By a function $u:{M}\to\mathbb{R}$ and an integer $K\ge 1$, note  $u_K=\frac{1}{K}\sum_{i=0}^{K-1}u\circ f^i$.
	Since $f$ is assumed Lipschitz,  $u\in\mathcal{C}^{0,\alpha}(M)$ for some $0<\alpha\le 1$ implies that $u_K\in\mathcal{C}^{0,\alpha}(M)$.
	Additionally, one has that
	\begin{align}\label{eq15-1}
	\int u d\mu=\int u_K d\mu \text{ for all } \mu\in\mathcal{M}({M},f).
	\end{align}
	Therefore, $$\beta(u;{M},f)=\beta(u_K;{M},f)\text{ and } \mathcal{M}_{min}(u;{M},f)=\mathcal{M}_{min}(u_K;{M},f).$$

\begin{thm}\label{thm-2.5}Let $f:M\rightarrow M$ be an Anosov diffeomorphism on a smooth compact manifold $M$. Then for $0<\alpha\le 1$, there is an integer $K=K(\alpha)$ such that for all $u\in \mathcal{C}^{0,\alpha}({M})$ with $\beta(u;{M},f)\ge 0$,  there is a function $v\in \mathcal{C}^{0,\alpha}({M})$ such that $u_K\ge v\circ f^K-v$.
\end{thm}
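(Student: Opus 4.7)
The plan is to follow Bousch's strategy for expanding maps, lifted to the Anosov setting via symbolic dynamics. Since $\beta(u;M,f)\ge 0$, it suffices to produce $v\in\mathcal{C}^{0,\alpha}(M)$ satisfying the stronger inequality $u_K - \beta(u;M,f)\ge v\circ f^K - v$, as adding $\beta(u;M,f)\ge 0$ back only reinforces the desired conclusion. I would begin by taking a Markov partition of $(M,f)$ of sufficiently small diameter, producing a finite-to-one Hölder semi-conjugacy $\pi:(\Sigma,\sigma)\to (M,f)$ where $\Sigma$ is a two-sided subshift of finite type. The lifted observable $\tilde u:=u\circ\pi$ is then Hölder of some exponent $\alpha'$ determined by $\alpha$ and the regularity of $\pi$.

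Next I would apply Sinai's lemma, writing $\tilde u = \tilde u^+ + \tilde\phi\circ\sigma - \tilde\phi$ with $\tilde u^+$ depending only on future coordinates and $\tilde\phi$ Hölder on $\Sigma$, so that $\tilde u^+$ descends to the expanding one-sided subshift $(\Sigma^+,\sigma^+)$. On $\Sigma^+$ I would invoke Bousch's cone-contraction argument: for $K=K(\alpha)$ large, the Lax--Oleinik operator
\[
(\mathcal{L}w)(y) \;=\; \min_{\sigma^+(x)=y}\bigl\{w(x) + \tilde u^+_K(x) - K\tilde\beta\bigr\}
\]
preserves a suitable cone of Hölder functions with uniformly bounded Hölder seminorm and admits a fixed point $\tilde v^+$, yielding $\tilde u^+_K + \tilde v^+ - \tilde v^+\circ(\sigma^+)^K \ge 0$ on $\Sigma^+$. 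Pulling back to $\Sigma$ and combining with the telescoped $K$-Birkhoff sum of the coboundary $\tilde\phi$ produces a Hölder $\tilde v$ on $\Sigma$ satisfying $\tilde u_K + \tilde v - \tilde v\circ\sigma^K \ge \beta(u;M,f)$.

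For the descent to $M$, I would use that $\pi$ is injective off the measure-zero Markov boundary and Hölder on either side of the boundary, so the inequality on $\Sigma$ transfers pointwise to a dense subset of $M$; a continuity argument using the local product structure then extends it to all of $M$, producing a Hölder $v$ on $M$ with $u_K \ge v\circ f^K - v$. An alternative is to define $v(x):=\min_{y\in\pi^{-1}(x)}\tilde v(y)$ and check Hölder continuity directly from the geometry of the Markov rectangles.

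The principal obstacle is the fixed-point step on $\Sigma^+$: one must choose $K=K(\alpha)$ so that the expansion factor of $(\sigma^+)^K$, raised to the Hölder power $\alpha'$, dominates the possible Hölder growth in one iteration of $\mathcal{L}$, yielding a strict cone contraction. This quantitative balance is the heart of Bousch's original argument and is what fixes the choice of $K(\alpha)$. A secondary delicate point is verifying Hölder regularity under the descent; this is handled via the uniform transversality of stable and unstable leaves within each Markov rectangle.
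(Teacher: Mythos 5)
Your route (Markov partition $\to$ Sinai's lemma $\to$ Bousch's cone contraction on the one-sided shift $\to$ descent to $M$) is genuinely different from the paper's, which simply invokes Bousch's Ma\~n\'e--Conze--Guivarc'h lemma for \emph{amphidynamical} systems \cite{Bousch_Mane}: there the sub-action $v$ is constructed directly on the manifold, using the local product structure and the hyperbolicity of $f^K$, precisely to avoid passing through symbolic dynamics. The reason that detour is avoided is that your final descent step has a genuine gap. The algebraic part of the descent is fine: if $\tilde v$ is a sub-action on $\Sigma$ and you set $v(x):=\min_{y\in\pi^{-1}(x)}\tilde v(y)$, the cocycle inequality does transfer to $M$ pointwise. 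What fails is the regularity. The fiber $\pi^{-1}(x)$ jumps discontinuously as $x$ crosses the boundary of the Markov rectangles (a single preimage in the interior, several on the boundary), so $x\mapsto\min_{y\in\pi^{-1}(x)}\tilde v(y)$ is in general only semicontinuous, not continuous, let alone $\alpha$-H\"older. Your alternative --- transfer the inequality on the dense set where $\pi$ is injective and extend by continuity --- presupposes that the function being extended is (uniformly) continuous there, which is the same unproved claim. To close this one would have to show that $\tilde v$ is constant on fibers of $\pi$ (or can be corrected by a coboundary to be so), which is a nontrivial Walters-type compatibility statement and is exactly the known obstruction to descending sub-actions from symbolic models.

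A second, quantitative issue: the theorem asks for $v$ in the \emph{same} class $\mathcal{C}^{0,\alpha}(M)$ as $u$, and this is what the MCGBP property requires downstream. Your chain loses H\"older exponent at least twice --- once because $\pi$ is only H\"older (so $\tilde u=u\circ\pi$ has exponent $\alpha'<\alpha$ relative to the manifold metric), and once in Sinai's lemma, which typically halves the exponent of $\tilde\phi$. Even if the descent could be carried out, the resulting $v$ would a priori lie in $\mathcal{C}^{0,\alpha''}(M)$ for some $\alpha''<\alpha$. Bousch's direct argument on $M$ sidesteps both problems: the Lax--Oleinik operator is run on $\mathcal{C}^{0,\alpha}(M)$ itself, with $K(\alpha)$ chosen so that the contraction along stable leaves and the expansion along unstable leaves of $f^K$, raised to the power $\alpha$, beat the constants in the cone estimate. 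If you want a symbolic-dynamics proof you must address the fiber-compatibility of $\tilde v$ and the exponent bookkeeping explicitly; as written, the proposal does not.
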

\begin{proof} The proof mainly follows Bousch's work in \cite{Bousch_Mane}, to which we refer readers for detailed proof. It is worth to point out that the only difference is that, in our setting, one needs a large integer $K=K(\alpha)$ to grantee that $(M, f^K)$ meets the condition in \cite{Bousch_Mane},  and  to replace $u$ by $u_K$ as $\beta(u;{M},f)=\beta(u_K;{M},f)$.
\end{proof}

\section*{Acknowledgement}
At the end, we would like to express our gratitude to Tianyuan Mathematical Center in Southwest China, Sichuan University and Southwest Jiaotong University for their support and hospitality.

\end{document}